\newcommand{\barint}{
         \rule[.036in]{.12in}{.009in}\kern-.16in
          \displaystyle\int  }
\newcommand{\R}{{\mathbb{R}}}
\newcommand{\rn}{{\mathbb{R}^{n}}}
\newcommand{\sph}{{\rm supp\,\phi}}
\newcommand{\Om}{{\Omega}}
\newcommand{\Oc}{{\Omega\cap}}
\newcommand{\al}{{\alpha}}
\newcommand{\be}{{\beta}}
\newcommand{\D}{{\Delta}}
\newcommand{\s}{{\sigma}}
\newcommand{\ve}{\varepsilon}
\newtheorem{theo}{\bf Theorem}[section]
\newtheorem{coro}{\bf Corollary}[section]
\newtheorem{lem}{\bf Lemma}[section]
\newtheorem{rem}{\bf Remark}[section]
\newtheorem{defi}{\bf Definition}[section]
\newtheorem{remark}{\bf Remark}[section]
\newcommand{\essinf}{\mathop{\textrm{ess\,inf}}}
\newcommand{\esssup}{\mathop{\textrm{ess\,sup}}}
\title{Hardy--type inequality  in variable exponent Lebesgue spaces derived from nonlinear problem}
\author[$\dagger$]{Sylwia Dudek}
\author[$\diamond$]{Iwona Skrzypczak\footnote{The author was supported by NCN grant 2011/03/N/ST1/00111.}}
\affil[$\dagger$]{\small
Institute of Mathematics,
Krakow University of Technology, \newline
ul. Warszawska 24, 31-155 Krakow, Poland \newline e--mail: sbarnas@pk.edu.pl}
\affil[$\diamond$]{\small
Faculty of Mathematics, Informatics and Mechanics,
University of Warsaw, \newline
ul. Banacha 2, 02-097 Warsaw, Poland \newline e--mail: iskrzypczak@mimuw.edu.pl}
\date{}
\begin{document}
\maketitle
\thispagestyle{empty}

\begin{abstract}
We derive a family of weighted Hardy--type inequalities in the variable exponent Lebesgue space with an additional term of the form
 \[
\int_\Om \ |\xi|^{p(x)} \mu_{1,\beta}(dx)\leqslant \int_\Om |\nabla \xi|^{p(x)}\mu_{2,\beta}(dx)+\int_\Om \left|\xi{\log \xi} \right|^{p(x)}  \mu_{3,\beta}(dx),
\]
where $\xi$ is any compactly supported Lipschitz function. The involved measures depend on a certain solution to
 the partial differential inequality involving $p(x)$--Laplacian ${-}\Delta_{p(x)} u\geqslant \Phi$, 
 defined on an open and not necessarily bounded  subset $\Omega\subseteq\rn $, and a~certain parameter~$\be$. We derive new Caccioppoli--type inequality for the solution $u$.  As its consequence we get Hardy--type inequality.

We present the derivation of the family of weighted Hardy--type inequalities in $\Omega\subseteq\rn $. We illustrate the result by several one--dimensional examples.
The paper extends the recent results of the second author which imply classical Hardy and Hardy--Poincar\'{e} inequalities with the optimal constants.
\end{abstract}

\noindent {\bf Keywords:} \  $p(x)$--Laplacian, Hardy inequality, Caccioppoli inequality, variable exponent Lebesgue spaces.

\medskip

\noindent
{\bf 2010 Mathematics Subject Classification:} 26D10, 35J60, 35J91.

\section{Introduction}\label{intro}
In this paper  we derive a family of Hardy--type inequalities with variable exponent of the form \begin{equation}\label{H-P}
\int_\Om \ |\xi|^{p(x)} \mu_{1,\beta}(dx)\leqslant \int_\Om |\nabla \xi|^{p(x)}\mu_{2,\beta}(dx)+\int_\Om \left|\xi{\log \xi} \right|^{p(x)} \frac{\left|\nabla p(x)\right|^{p(x)}}{{p(x)}^{p(x)}} \mu_{2,\beta}(dx),
\end{equation}
where $\Om$ is an open subset of $\rn$,  not necessarily bounded, the exponent $p$ is such that $p \in W_{loc}^{1,1}(\Omega)$, ${p}^{p(x)},|\nabla p|^{p(x)}\in L^{1}_{loc}(\Omega)$ and satisfies $1<\textrm{ess}\inf_{x \in \Omega} p(x) \leqslant p(x) \leqslant \textrm{ess}\sup_{x \in \Omega} p(x)<\infty$, and a~function $\xi:\Om\to\R$ is  compactly supported and  Lipschitz. The involved measures $\mu_{1,\beta}(dx), \mu_{2,\beta}(dx)$ depend on $p(x)$, a~certain parameter $\be$, a~continuous function $\s(x)$, and a~nonnegative weak solution $u$ to the PDI
\begin{equation}\label{plapxproblem}
-\Delta_{p(x)}u\geqslant \Phi \quad \mathrm{in}\quad \Om,
\end{equation}
with a locally integrable function $\Phi$. We admit the functions $\s(x)$ and $\Phi$ satisfying compatibility conditions with $p(x)$ (see crucial conditions).


We deal with the variable exponent Lebesgue spaces, which recently have received more and more attention both --- from the theoretical and from the applied point of view. We refer to~\cite{ks,KoRa} for the detailed information on the theoretical approach to the Lebesgue and the Sobolev spaces with variable exponents. Various attempts to prove existence, uniqueness or regularity theory for problems stated in variable exponent spaces can be found e.g.~in~\cite{barnas, fanzhang}. We refer for the survey \cite{overview}  summarising inter alia results on qualitative properties of solutions to the related PDEs. We mark that the variable exponent Lebesgue spaces are investigated since 1930s when Orlicz introduced them in~\cite{Orlicz}. They are under permanent development by various groups of mathematicians~\cite{cruz,Hudzik,nakano1,nakano2}.

 The typical examples of equations stated in variable exponent spaces are models of electrorheological fluids, see e.g.~\cite{raj-ru1,raj-ru2,el-rh2}. This kind of materials have been intensively investigated recently. Electrorheological fluids change their mechanical properties dramatically when an external electric field is applied, so the variable exponent Lebesgue setting is natural for their modelling.
Some classical models are also generalised in the variable exponent Lebesgue spaces. In \cite{ks} we find investigations on Poisson equation, as well as Stokes problem being of fundamental importance in describing fluid dynamics. Let us mention that various models require different types of restrictions on $p(x)$, therefore the unified approach is missing.

Hardy--type inequalities are important tools in various fields of analysis.  Let us mention such branches as functional analysis, harmonic analysis, probability theory, and PDEs. Hardy--type inequalities are investigated on their own in the classical way~\cite{kmp,muckenhoupt,plap}, as well as in the various generalised frameworks~\cite{bogdan,bhs,buc,akkpp2012,orliczhardy}.

Recently, Hardy--type inequalities in the variable exponent Lebesgue spaces have become a lively studied topic of analysis~\cite{AdamHa,cruzuribe,DiSa,HaHaKo,Ha,harman2,HaMa,harman3,MaHa,MaZe1,MaZe2,MCO,miz1,MCMO,RaSa,Samko1}. The most common idea in these papers is investigating links between validity of Hardy--type inequalities and boundedness of maximal operator.  One--dimensional case is considered in~\cite{DiSa,MCO,MCMO}, where the exponents are possibly different on the right-- and the left--hand side of the inequality. The paper~\cite{Samko1} is devoted to the inequality with the weights depending on distance from a single point, while in~\cite{HaHaKo,miz1} the weights depend on distance from a boundary in $\R^n$.  There are several papers~\cite{cruzuribe,Ha,harman2,HaMa,harman3,MaZe1,MaZe2} dealing with the necessary and sufficient conditions for the validity of Hardy inequality involving  Hardy operator. Different approach we find in~\cite{Bo}, where the authors investigate the class of admissible weights for Hardy--type inequality holding for  nonincreasing functions.

We point out that in the majority of the above papers the authors deal with the norm version of Hardy--type inequality. We obtain the modular one, which is stronger. We would like to stress that only in the constant exponent case the both types are equivalent. In the variable exponent case it is not direct to transform one of these types to another. To the authors' best knowledge the only result of this kind is given by Fan--Zhao~\cite[Theorem~1.3]{orlicz} where the authors derive a tool giving certain form of the norm version of Hardy inequality from a modular one.

The purpose of this paper is to introduce a new tool for derivation of Hardy--type inequalities with variable exponent on the basis of nonlinear problems. The idea of similar constructions in the constant exponent is present in a few papers. In~\cite{barbhardy04} Barbatis, Filippas, and Tertikas derive Hardy--type inequalities on a domain where certain power of the function expressing distance from the boundary is $p$--superharmonic. In~\cite{dambr} D'Ambrosio obtaines an inequality related to~\eqref{H-P} as a consequence of the inequality $-\Delta_p(u^\alpha) \geqslant 0$ with a certain constant~$\alpha$. Similar approach can be found in~\cite{plap} by the second author.

Our considerations are based  on  the methods introduced in~\cite{nonex,pohmi_99} and developed in~\cite{plap,bcp-plap,orliczhardy} in various ways.
 In~\cite{nonex} the authors investigate nonexistence of nontrivial nonnegative  weak solutions to $A$--harmonic problems starting with derivation of Caccioppoli--type estimate for their weak solutions. As a starting point to derive Hardy--type inequality we focus on this step.
 We modify the proof of Theorem~4.1 from~\cite{plap}, where the investigated PDI reads
\begin{equation}\label{problem}
-\Delta_pu\geqslant \Phi \quad \mathrm{in}\quad \Om,
\end{equation}
with a locally integrable function $\Phi$ being in a certain sense not very negative. This condition generalises the requirement that the solution $u$ is supposed to be a $p$--superharmonic function. As it is shown in~\cite{plap}, the substitution in the derived Caccioppoli--type inequality for solutions implies the family of Hardy--type inequalities of the form
\begin{equation*}
\int_\Omega \ |\xi|^p \mu_{1,\beta}(dx)\leqslant \int_\Omega |\nabla \xi|^p\mu_{2,\beta}(dx),
\end{equation*}
where $1<p<\infty$, $\xi:\Omega\to\R$ is  compactly supported Lipschitz function, and $\Omega$ is an open subset of $\rn$. The involved measures $\mu_{1,\beta}(dx)$, $\mu_{2,\be}(dx)$ depend on a certain parameter $\beta$ and on $u$ --- a nonnegative weak solution to~\eqref{problem}. Among other results it implies classical Hardy and Hardy--Poincar\'{e} inequalities with optimal constants (see~\cite{plap,bcp-plap}, respectively). We retrieve the main result of~\cite{plap}
as a special case here (see Theorem~\ref{theoplap}) and therefore we confirm all the examples  from~\cite{plap,bcp-plap}.

We extend the techniques from~\cite{plap} to the more general case when we deal with~\eqref{plapxproblem} instead of~\eqref{problem}. We derive Hardy--type inequality  in the variable exponent Lebesgue spaces on $\R^n$. Then we pay particular attention to the case of $n=1$, because it is easier to  compare with many existing one--dimensional results, e.g.~\cite{Bo,DiSa,HaHaKo,MCO,MCMO}. Moreover, higher dimensional problems may be reduced to this case, when we assume certain kind of symmetry.  The paper~\cite{barskrzy2} is devoted to further analysis of the results of our paper in $\R^n$. We hope that our result will be found useful in applied mathematics, especially in investigations on qualitative properties of solutions to nonlinear problems.

The paper is organised as follows. Section~\ref{CacSec} is devoted to derivation of Caccioppoli--type inequality for solutions to~\eqref{plapxproblem}. In Section~\ref{SecHar} we derive general $p(x)$--Hardy inequality for compactly supported Lipschitz functions. In Section~\ref{Sec1d} we concentrate on inequalities in one dimension. In Section~\ref{SecLink} we give detailed comparison with the results  existing in the literature. We conclude our paper in Section~\ref{SecOpen} by posing open questions.

\section{Preliminaries}\label{prelim}
\subsubsection*{Notation}

In the sequel we assume that $\Omega\subseteq\mathbb{R}^n$ is an open subset not necessarily bounded. If $f$ is defined on the set $A$ by $f\chi_{A}$ we understand function $f$ extended by $0$ outside $A$.
By $\langle\cdot,\cdot\rangle$ we understand the classical scalar product in~$\R^n$. We say that the function $f$ has values separated from $0$, if there exists a constant $c_0$ such that $f(x)\geqslant c_0>0$ for every $x$.


\subsubsection*{General Lebesgue and Sobolev spaces}

In the sequel we suppose that measurable function $p:\Omega \rightarrow (1, \infty)$ is such that
	\begin{equation}\label{P}
  1<p^-:=\essinf_{x \in \Omega} p(x) \leqslant p(x) \leqslant p^+:=\esssup_{x \in \Omega} p(x)<\infty.
	\end{equation}

We recall some properties of the variable exponent
 spaces $L^{p(x)}(\Omega)$ and $W^{1,p(x)}(\Omega)$. By
$E(\Omega)$ we denote the set of all equivalence classes of measurable real functions defined on $\Omega$ being equal almost everywhere. The variable exponent Lebesgue space is defined as
\[
    L^{p(x)}(\Omega)=\{u \in E(\Omega): \int_{\Omega} |u(x)|^{p(x)} dx<\infty \}
\]
  equipped with the Luxemburg--type norm
\[
    \|u\|_{L^{p(x)}(\Omega)}:=\inf \Big\{\lambda>0: \int_{\Omega} \Big|\frac{u(x)}{\lambda}\Big|^{p(x)}dx \leqslant 1 \Big\}.\]
We define the variable exponent Sobolev space by
\[
    W^{1,p(x)}(\Omega) = \{u \in L^{p(x)}(\Omega): \nabla u \in L^{p(x)}(\Omega; \mathbb{R}^n)\}
\]
  equipped with the norm $\|u\|_{W^{1,p(x)}(\Omega)} = \|u\|_{L^{p(x)}(\Omega)}+\|\nabla u\|_{L^{p(x)}(\Omega)}.$

  Then $(L^{p(x)}(\Omega),\|\cdot\|_{L^{p(x)}(\Omega)})$ and $(W^{1,p(x)}(\Omega), \|\cdot\|_{W^{1,p(x)}(\Omega)})$ are
  separable and reflexive Banach spaces.

  For more detailed information we refer to 
	\cite{ks,fan,orlicz}.
	
\medskip

By $\mathcal{P}(\Omega)$	we denote the class of the functions $p$ such that~\eqref{P} is satisfied and $p \in W_{loc}^{1,1}(\Omega)$, ${p}^{p(x)},|\nabla p|^{p(x)}\in L^{1}_{loc}(\Omega)$.
	
\subsubsection*{Differential inequality}

Our analysis is based on the following differential inequality.

\begin{defi}\label{defnier}
Let $\Om$ be any open subset of $\rn$. We assume that the measurable function $p:\Omega \rightarrow (1, \infty)$ satisfies~\eqref{P}
and $\Phi$ is the locally integrable  function defined in $\Om$ such that  for every nonnegative compactly supported $w\in
W^{1,p(x)}(\Om)$, we have
$
 \int_\Om \Phi w\,dx >-\infty.
$\\
  Let $u\in W^{1,p(x)}_{loc}(\Om)$ and $u\not\equiv 0$. We say that
\[
-\Delta_{p(x)} u\geqslant \Phi,
\]
if for every nonnegative compactly supported $w\in
W^{1,p(x)}(\Om)$, we have
\begin{equation}\label{nikfo}
\langle -\Delta_{p(x)} u,w \rangle := \int_\Om |\nabla u|^{p(x)-2}\langle\nabla
u,\nabla w\rangle\, dx \geqslant \int_\Om \Phi w\, dx.
\end{equation}
\end{defi}
\begin{rem}\label{mal1}\rm
Note that $p(x)$--Laplacian is a continuous, bounded, and strictly monotone operator defined for every compactly supported function $w \in W^{1,p(x)}(\Om)$ (see e.g. \cite[Theorem 3.1]{fanzhang} for the definitions and the proofs).
In particular, it is well--defined in the distributional sense.
\end{rem}

\subsubsection*{Crucial conditions} 

We suppose that the measurable function $p:\Omega \rightarrow (1,\infty)$ satisfies~\eqref{P}, nonnegative $u
\in W^{1,p(x)}_{loc}(\Om)$ and $\Phi\in L^{1}_{loc}(\Om)$ satisfy PDI $-\Delta_{p(x)} u\geqslant \Phi$, in the sense of Definition~\ref{defnier}. 
 We assume that there exist a continuous function $\s(x):\overline{\Omega}\to\R$ and a parameter $\beta>0$, such that the following conditions are satisfied
\begin{eqnarray}
&\label{sx}{\Phi\cdot u}+{\s(x) |\nabla u|^{p(x)}}\geqslant 0 \quad{\rm a.e.\  in}\quad \Om,&\\
&\be>\sup\limits_{x \in \overline{\Omega}} \s(x).&\label{sbeta}
\end{eqnarray}

\section{Caccioppoli estimate for solution of differential inequality $-\D_{p(x)} u\geqslant \Phi$}\label{CacSec}

Before we formulate the main theorem of this section we state the following useful lemmas.
	
\begin{lem}\label{nonex1}
 Let $u
\in W^{1,p(x)}_{loc}(\Om)$, $u>0$ and $\phi$ be a nonnegative Lipschitz function with
compact support in $\Om$ such that the integral $\int_{\sph}{|\nabla\phi|}^{p(x)}\phi^{1-p(x)}\, dx$ is finite.
We fix
 $0<\delta<R$, $\be>0$ and denote
\begin{eqnarray} \label{G}
u_{\delta,R}(x):= {\rm min}\left\{ u(x)+\delta, R \right\},
 & & G(x):= (u_{\delta,R}(x))^{-\be}\phi (x).
\end{eqnarray}
Then $u_{\delta,R}\in
W^{1,p(x)}_{loc}(\rn )$ and $G\in
W^{1,p(x)}(\Om)$.
\end{lem}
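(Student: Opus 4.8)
The plan is to reduce both assertions to two standard facts about Sobolev functions: the chain rule for a Lipschitz outer function composed with a Sobolev function, and the stability of $W^{1,p(x)}$ under multiplication by a bounded Lipschitz factor. Since $p(x)\ge p^->1$ forces $W^{1,p(x)}_{loc}(\Om)\subseteq W^{1,p^-}_{loc}(\Om)\subseteq W^{1,1}_{loc}(\Om)$, the classical distributional chain rule applies on every compactly contained subset and yields the a.e. gradient formulas; the variable exponent then enters only when I verify that the resulting functions and their gradients lie in the correct $L^{p(x)}$ spaces, which I do by combining the uniform bounds $\de\le u_{\de,R}\le R$ with the local integrability $|\na u|\in L^{p(x)}_{loc}(\Om)$ and the compactness of $\sph$.

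For the first claim I write $u_{\de,R}=F(u+\de)$ with $F(t)=\min\{t,R\}$. As $F$ is Lipschitz with $F'=\chi_{\{t<R\}}$ a.e. and $u+\de\in W^{1,p(x)}_{loc}(\Om)$, the truncation lemma gives $u_{\de,R}\in W^{1,1}_{loc}$ with
\[
\na u_{\de,R}=\chi_{\{u+\de<R\}}\,\na u\qquad\text{a.e. in }\Om .
\]
Since $\de\le u_{\de,R}\le R$, the function is bounded, hence in $L^{p(x)}_{loc}$, and $|\na u_{\de,R}|\le|\na u|$ gives $\na u_{\de,R}\in L^{p(x)}_{loc}$; thus $u_{\de,R}\in W^{1,p(x)}_{loc}$. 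Boundedness of the truncation is precisely what removes any difficulty coming from possible blow--up of $u$, so the conclusion persists on every compactly contained subset.

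For the second claim I factor $G=\psi(u_{\de,R})\,\phi$ with $\psi(t)=t^{-\be}$. Because $\de>0$, $\psi$ is $C^1$ and Lipschitz on the range $[\de,R]$, so $\psi(u_{\de,R})\in W^{1,p(x)}_{loc}(\Om)$, is bounded between $R^{-\be}$ and $\de^{-\be}$, and has $\na(\psi(u_{\de,R}))=-\be\,u_{\de,R}^{-\be-1}\na u_{\de,R}$. Multiplying by the bounded, compactly supported Lipschitz function $\phi$ and applying the product rule yields
\[
\na G=-\be\,u_{\de,R}^{-\be-1}\phi\,\na u_{\de,R}+u_{\de,R}^{-\be}\na\phi\qquad\text{a.e. in }\Om .
\]
It remains to check that $G$ and $\na G$ belong to $L^{p(x)}(\Om)$. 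The function $G$ is bounded and compactly supported, hence in $L^{p(x)}(\Om)$. In $\na G$ the first term is dominated by $\be\de^{-\be-1}\|\phi\|_\infty\,|\na u|\,\chi_{\sph}$, which lies in $L^{p(x)}(\Om)$ since $|\na u|\in L^{p(x)}(\sph)$ by local integrability and $\sph$ is compact; the second term is dominated by $\de^{-\be}|\na\phi|$, which is bounded with compact support and therefore in $L^{p(x)}(\Om)$. As $G$ has compact support in $\Om$, local membership upgrades to $G\in W^{1,p(x)}(\Om)$.

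The only genuinely delicate point is the modular integrability bookkeeping for $\na G$: one must prevent $u_{\de,R}^{-\be-1}$ from blowing up, which is guaranteed by the strict lower bound $\de$, and must localize $\na u$, which is guaranteed by $\sph$ being compact together with $p^-\le p(x)\le p^+$, so that boundedness plus $L^{p(x)}(\sph)$--control of $|\na u|$ suffice. I note that the finiteness assumption $\int_{\sph}|\na\phi|^{p(x)}\phi^{1-p(x)}\,dx<\infty$ is not needed for the membership asserted here; it is a standing hypothesis that will be used when $G$ is employed as a test function in the Caccioppoli estimate of the next section.
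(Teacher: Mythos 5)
Your proof is correct and takes essentially the same route as the paper's, which disposes of the lemma in a short remark: it cites \cite[Proposition 8.1.9]{ks} for the truncation step, then uses the bounds $\delta\leqslant u_{\delta,R}\leqslant R$ to get $(u_{\delta,R})^{-\be}\in W^{1,p(x)}_{loc}$, and the compact support of $\phi$ to conclude $G\in W^{1,p(x)}(\Om)$ --- exactly the skeleton you fill in with the chain rule, product rule, and explicit modular checks. One small caveat shared with the paper itself: your argument (like the cited proposition) really yields $u_{\delta,R}\in W^{1,p(x)}_{loc}(\Om)$ rather than $W^{1,p(x)}_{loc}(\rn)$ as literally stated, which is harmless since only the membership $G\in W^{1,p(x)}(\Om)$ with compact support is used in the sequel, and your observation that the finiteness of $\int_{\sph}|\nabla\phi|^{p(x)}\phi^{1-p(x)}\,dx$ is not needed here is also accurate.
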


\begin{remark}\rm
See e.g. \cite[Proposition 8.1.9]{ks}, to obtain $u_{\delta,R}\in
W^{1,p(x)}_{loc}(\rn )$. We note that the truncated function satisfies $\delta \leqslant u_{\delta,R}(x) \leqslant R$ and therefore we have $(u_{\delta,R}(x))^{-\be} \in W^{1,p(x)}_{loc}(\rn )$.  The function $G$ is compactly supported,  thus $G\in
W^{1,p(x)}(\Om)$.
\end{remark}

\begin{lem}\label{lemmjednor}
Let a measurable function $p:\Omega \rightarrow (1, \infty)$ satisfy~\eqref{P}, a function $\tau(x):\Omega \rightarrow \R_+$ be continuous, bounded, with values separated from $0$, and $s_1,s_2\geqslant 0$. Then for a.e. $x\in\Om$ we have
\[s_1s_2^{p(x)-1}\leqslant \frac{1}{p(x) \tau(x)^{p(x)-1}}\cdot  s_1^
{p(x)} +\frac{p(x)-1}{p(x)} \tau(x) \cdot s_2^{p(x)} .\]
\end{lem}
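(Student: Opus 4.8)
The plan is to reduce the statement to a pointwise scalar inequality and then recognize it as a weighted form of Young's inequality. Fix a point $x\in\Om$ at which $p(x)\in(1,\infty)$ is finite and $\tau(x)>0$ is finite; since $p$ is measurable with the bounds in~\eqref{P} and $\tau$ is continuous, bounded, and separated from $0$, these conditions hold for a.e.\ $x$, so it suffices to verify the inequality for such a fixed $x$. I would abbreviate $p=p(x)$ and $\tau=\tau(x)$, so that $p>1$ and $\tau>0$ are ordinary numbers and $s_1,s_2\geqslant 0$ are arbitrary nonnegative reals.

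The basic tool is Young's inequality $ab\leqslant \frac{a^p}{p}+\frac{b^{p'}}{p'}$ with the conjugate exponent $p'=p/(p-1)$. First I would record the unweighted case by taking $a=s_1$ and $b=s_2^{p-1}$: since $b^{p'}=s_2^{p}$ and $1/p'=(p-1)/p$, this gives $s_1 s_2^{p-1}\leqslant \frac{1}{p}s_1^{p}+\frac{p-1}{p}s_2^{p}$. To insert the weight with the correct powers of $\tau$, I would apply this same inequality not to $s_2$ but to $\tau s_2$ and compensate by factoring: writing $s_2^{p-1}=\tau^{-(p-1)}(\tau s_2)^{p-1}$ yields
\[
s_1 s_2^{p-1}=\tau^{-(p-1)}\,s_1(\tau s_2)^{p-1}\leqslant \tau^{-(p-1)}\left(\frac{1}{p}s_1^{p}+\frac{p-1}{p}\tau^{p}s_2^{p}\right).
\]
Simplifying the powers of $\tau$ on the right --- namely $\tau^{-(p-1)}$ in front of $s_1^p$ and $\tau^{-(p-1)}\cdot\tau^{p}=\tau$ in front of $s_2^p$ --- produces exactly the asserted bound.

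There is no genuine obstacle here; the only content is the calibration of the weight. The single point that requires the hypotheses on $\tau$ is that the factor $\tau^{-(p-1)}$ must be a well-defined finite positive number, which is guaranteed by $\tau$ being separated from $0$ (and bounded, so that all terms remain finite); this, together with the measurability of $p$, ensures the pointwise inequality holds for a.e.\ $x$, which is the claim. The degenerate cases $s_1=0$ or $s_2=0$ are trivially covered, since then the left-hand side vanishes while the right-hand side stays nonnegative.
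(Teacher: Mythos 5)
Your proof is correct and follows essentially the same route as the paper: both reduce the claim to the classical Young inequality $ab\leqslant \frac{a^{p}}{p}+\frac{p-1}{p}b^{p/(p-1)}$ and insert the weight by a calibration of the arguments. The only cosmetic difference is that the paper distributes the weight into both arguments (taking $a=s_1/\eta^{p-1}$, $b=(s_2\eta)^{p-1}$ and substituting $\tau=\eta^{p}$ at the end), whereas you apply the unweighted inequality to $(s_1,\tau s_2)$ and multiply through by $\tau^{-(p-1)}$ --- the same computation organized differently.
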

\begin{proof}[\textbf{Proof}]We apply  classical Young inequality
$ab\leqslant \frac{a^{p(x)}}{p(x)}+\frac{p(x)-1}{p(x)}b^\frac{p(x)}{p(x)-1}$ with $a=\frac{s_1}{\eta(x)^{p(x)-1}},\ b=(s_2\eta(x))^{p(x)-1}$, where $\eta(x)$ is an arbitrary continuous, bounded function with values separated from $0$, to get

\begin{eqnarray*}
s_1s_2^{p(x)-1}&=&\left(\frac{s_1}{\eta^{p(x)-1}}\right)(s_2\eta)^{p(x)-1}\leqslant\\
&\leqslant&\frac{1}{p(x)}\left(\frac{s_1}{\eta^{p(x)-1}}\right)^{p(x)}+
\frac{p(x)-1}{p(x)}(s_2\eta)^{(p(x)-1)\frac{p(x)}{p(x)-1}}=\\
&=&\frac{1}{p(x)\eta^{p(x)(p(x)-1)}}\cdot s_1^{p(x)}+ \frac{p(x)-1}{p(x)} \eta^{p(x)}\cdot s_2^{p(x)}.\\
\end{eqnarray*}
Now it suffices to substitute $\tau(x)=\eta(x)^{p(x)}$. 
\end{proof}

\begin{lem}\label{crit}
Let $u\in W^{1,1}_{loc}(\Om)$ be defined everywhere by  the formula (see
e.g.~\cite{hajpunkt})
\begin{equation*}
u(x):= \limsup_{r\to 0}\barint_{B(x,r)} u(y)dy
\end{equation*}
and let $t\in \R$. Then
\[
\{ x\in \rn : u(x)=t \}\subseteq \{ x\in\rn :\nabla u(x)=0\}\cup N,
\]
where $N$ is a set of Lebesgue's measure zero.
\end{lem}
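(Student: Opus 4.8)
The plan is to reduce the statement to the assertion that the weak gradient $\nabla u$ vanishes almost everywhere on the level set $\{u=t\}$, and then to derive this from the classical chain rule for truncations of Sobolev functions. Since the conclusion is only required up to a set $N$ of Lebesgue measure zero, the everywhere-defined precise representative used to form the level set $\{u=t\}$ and the merely almost-everywhere-defined weak gradient can be reconciled by collecting all ambiguous points into $N$. Note that the variable exponent plays no role: the hypothesis is $u\in W^{1,1}_{loc}(\Om)$, so the whole argument takes place in the ordinary $W^{1,1}_{loc}$ setting.

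First I would recall the standard fact that for any $v\in W^{1,1}_{loc}(\Om)$ the positive part $v^+=\max\{v,0\}$ again lies in $W^{1,1}_{loc}(\Om)$ and satisfies $\nabla v^+=\chi_{\{v>0\}}\,\nabla v$ almost everywhere (proved by mollifying $v$, applying the smooth chain rule to regularisations of $s\mapsto s^+$, and passing to the limit). Applying this with $v=u-t$ gives $\nabla(u-t)^+=\chi_{\{u>t\}}\,\nabla u$ a.e., while applying it with $v=t-u$ gives $\nabla(u-t)^-=\nabla(t-u)^+=-\chi_{\{u<t\}}\,\nabla u$ a.e.

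Then I would use the decomposition $(u-t)^+-(u-t)^-=u-t$ and differentiate it: since $\nabla(u-t)^+-\nabla(u-t)^-=\nabla u$, substituting the two formulas above yields
\[
\chi_{\{u>t\}}\,\nabla u+\chi_{\{u<t\}}\,\nabla u=\nabla u \quad \text{a.e. in } \Om,
\]
so that $\chi_{\{u=t\}}\,\nabla u=0$ almost everywhere, i.e. $\nabla u(x)=0$ for almost every $x$ with $u(x)=t$. Setting
\[
N:=\{x\in\rn : u(x)=t \text{ and } \nabla u(x)\neq 0\},
\]
where the level set is defined through the precise representative and $\nabla u$ is any fixed representative of the weak gradient, the displayed a.e. identity gives $|N|=0$; this is legitimate because two representatives of $u$ differ only on a null set, so $\chi_{\{u=t\}}$ is determined up to a null set independently of the chosen representative. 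By construction every point of $\{u=t\}$ lies either in $\{\nabla u=0\}$ or in $N$, which is exactly the asserted inclusion.

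The truncation formula is classical and routine, so the only delicate point — the main obstacle in a fully rigorous write-up — is the bookkeeping that produces $N$: one must ensure that the level set built from the pointwise representative is compatible with the almost-everywhere statement about $\nabla u$, which is precisely why the lemma is phrased using the precise representative of $u$ together with an explicit exceptional null set.
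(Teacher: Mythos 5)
Your proof is correct. Note that the paper does not actually prove this lemma: it is recalled as a known fact, with the citation to Bojarski--Haj\l{}asz serving mainly to fix the everywhere-defined precise representative of $u$. Your argument --- the Stampacchia truncation identity $\nabla v^{+}=\chi_{\{v>0\}}\nabla v$ a.e.\ applied to $v=u-t$ and $v=t-u$, combined with the decomposition $(u-t)^{+}-(u-t)^{-}=u-t$ to conclude $\chi_{\{u=t\}}\nabla u=0$ a.e. --- is exactly the standard proof of the fact the authors invoke, so your write-up supplies the self-contained justification that the paper leaves as a citation. Your bookkeeping is also sound: since any two representatives of $u$ agree off a null set, the set $\{u=t\}$ built from the precise representative differs from the one built from any other representative only by a null set, so the a.e.\ identity transfers, and defining $N$ as the set of points of $\{u=t\}$ where the fixed representative of $\nabla u$ does not vanish gives $|N|=0$ together with the asserted inclusion. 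The only cosmetic point worth flagging is that the lemma (following the paper's own loose phrasing) writes the level set as a subset of $\rn$ while $u$ lives on $\Om$; this is an imprecision in the statement, not a gap in your argument, and your proof is correctly carried out on $\Om$.
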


\bigskip

\noindent The main goal of this section is the following result. 
\begin{theo}[Caccioppoli estimate]\label{cac} Let $\Omega\subseteq\mathbb{R}^n$ be an open subset. 
We suppose that the measurable function $p:\Omega \rightarrow (1,\infty)$ satisfy~\eqref{P}, nonnegative $u
\in W^{1,p(x)}_{loc}(\Om)$ and $\Phi\in L^{1}_{loc}(\Om)$ satisfy PDI $-\Delta_{p(x)} u\geqslant \Phi$, in the sense of Definition~\ref{defnier}.
Assume further that functions $u$, $\Phi$, $p(x)$, $\s(x)$ and a parameter $\beta>0$ satisfy crucial conditions~\eqref{sx}
and~\eqref{sbeta}. 

 Then the~inequality
\begin{equation}\int_{\Omega}
\left(\Phi\cdot u+\s(x)|\nabla u|^{p(x)}\right)
{u}^{-\be-1} \chi_{\{u>0\}} \cdot \phi\,dx \leqslant \nonumber
\end{equation}
\begin{equation}
\label{caccc}\leqslant \int_{\Omega}
\frac{{(p(x)-1)}^{p(x)-1}}{{(p(x))}^{p(x)}(\be-\s(x))^{p(x)-1}}
\,u^{p(x)-\be-1}\chi_{\{\nabla u\neq 0\}}
\cdot|\nabla\phi|^{p(x)}\phi^{1-p(x)}\,dx,
\end{equation}
holds for every nonnegative Lipschitz function $\phi$ with
compact support in $\Om$ such that the integral $\int_{\sph}{|\nabla\phi|}^{p(x)}\phi^{1-p(x)}\, dx$ is finite.
\end{theo}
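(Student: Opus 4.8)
The plan is to test the weak formulation~\eqref{nikfo} with the admissible function $G=(u_{\delta,R})^{-\beta}\phi$ supplied by Lemma~\ref{nonex1}, and then to let $R\to\infty$ and $\delta\to0$. Using $\nabla u_{\delta,R}=\chi_{\{u+\delta<R\}}\nabla u$ a.e., one computes
\[
\nabla G=-\beta\,(u_{\delta,R})^{-\beta-1}\chi_{\{u+\delta<R\}}\,\phi\,\nabla u+(u_{\delta,R})^{-\beta}\nabla\phi,
\]
so that, after inserting $G$ into~\eqref{nikfo} and moving the resulting ``good'' gradient term to the left, I obtain for fixed $\delta,R$
\[
\int_\Omega\Phi\,(u_{\delta,R})^{-\beta}\phi\,dx+\beta\int_\Omega|\nabla u|^{p(x)}\chi_{\{u+\delta<R\}}(u_{\delta,R})^{-\beta-1}\phi\,dx\leqslant\int_\Omega|\nabla u|^{p(x)-2}\langle\nabla u,\nabla\phi\rangle\,(u_{\delta,R})^{-\beta}\,dx.
\]

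The decisive step is to estimate the right--hand side by Lemma~\ref{lemmjednor}. On $\{\nabla u\neq0\}$ I first bound $|\nabla u|^{p(x)-2}\langle\nabla u,\nabla\phi\rangle\leqslant|\nabla u|^{p(x)-1}|\nabla\phi|$ and then apply the lemma to $s_1s_2^{p(x)-1}$ with
\[
s_1=|\nabla\phi|\,(u_{\delta,R})^{\frac{p(x)-\beta-1}{p(x)}}\phi^{\frac{1-p(x)}{p(x)}},\qquad s_2=|\nabla u|\,(u_{\delta,R})^{-\frac{\beta+1}{p(x)}}\phi^{\frac{1}{p(x)}},
\]
and the crucial choice $\tau(x)=\frac{p(x)}{p(x)-1}\,(\beta-\sigma(x))$. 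By~\eqref{sbeta} together with the continuity of $p$ and $\sigma$, this $\tau$ is continuous, bounded and separated from $0$ on $\sph$, so the lemma applies; a direct bookkeeping of the exponents shows that the $s_2$--term reproduces exactly $(\beta-\sigma(x))|\nabla u|^{p(x)}(u_{\delta,R})^{-\beta-1}\phi$ and the $s_1$--term reproduces exactly the integrand of~\eqref{caccc}. Absorbing the $(\beta-\sigma)$--term into the $\beta$--term collapses the coefficient of the gradient term to $\sigma(x)$, leaving the $R$--truncation as an explicit error; writing $\beta\chi_{\{u+\delta<R\}}-(\beta-\sigma)=\sigma-\beta\chi_{\{u+\delta\geqslant R\}}$ yields
\[
\int_\Omega\Phi(u_{\delta,R})^{-\beta}\phi+\int_\Omega\sigma(x)|\nabla u|^{p(x)}(u_{\delta,R})^{-\beta-1}\phi\leqslant\int_\Omega\frac{(p(x)-1)^{p(x)-1}}{p(x)^{p(x)}(\beta-\sigma(x))^{p(x)-1}}(u_{\delta,R})^{p(x)-\beta-1}\chi_{\{\nabla u\neq0\}}|\nabla\phi|^{p(x)}\phi^{1-p(x)}+\beta R^{-\beta-1}\!\!\int_{\{u+\delta\geqslant R\}}\!\!|\nabla u|^{p(x)}\phi,
\]
where the last, nonnegative error vanishes as $R\to\infty$ by the local integrability of $|\nabla u|^{p(x)}$ on $\sph$.

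It then remains to pass to the limit. Letting $R\to\infty$ I use $u_{\delta,R}\uparrow u+\delta$, controlling the monotone behaviour of $(u_{\delta,R})^{-\beta-1}$ and of $(u_{\delta,R})^{p(x)-\beta-1}$; here the sign of the exponent $p(x)-\beta-1$ dictates whether monotone or dominated convergence is invoked, the domination being provided by the assumed finiteness of $\int_{\sph}|\nabla\phi|^{p(x)}\phi^{1-p(x)}\,dx$. This replaces $u_{\delta,R}$ by $u+\delta$. Finally I send $\delta\to0$: on $\{u>0\}$ the left integrand converges to $(\Phi u+\sigma(x)|\nabla u|^{p(x)})u^{-\beta-1}\phi$, which is nonnegative by~\eqref{sx}, while on $\{u=0\}$ Lemma~\ref{crit} gives $\nabla u=0$ a.e., so both the $\sigma|\nabla u|^{p(x)}$ term and the right--hand side (which carries $\chi_{\{\nabla u\neq0\}}\subseteq\{u>0\}$) vanish there; this is precisely what produces the factor $\chi_{\{u>0\}}$ in~\eqref{caccc}.

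The main obstacle is exactly this last limit. The nonnegativity of the limiting left integrand is available only in the limit $\delta\to0$ (for fixed $\delta$ the factor $\Phi$ may force the wrong sign), so a plain application of monotone convergence is not possible, and one must split $\Phi=\Phi^+-\Phi^-$ and secure a Fatou--type lower bound for the left side together with dominated convergence on the right, the domination again resting on the finiteness hypothesis on $\phi$. The genuinely delicate point is the residual contribution $\delta^{-\beta}\int_{\{u=0\}}\Phi\phi$ coming from the $\Phi$--term on $\{u=0\}$: one must show, using the one--sided control $\int_\Omega\Phi w>-\infty$ built into Definition~\ref{defnier} and the sign information carried by the differential inequality on $\{u=0\}$ (where $\nabla u=0$), that this term does not obstruct the inequality as $\delta\to0$. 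Keeping the convergence theorems and the exponent--sign bookkeeping consistent across the two limits is the hard part; the algebraic identities matching $s_1,s_2,\tau$ to the target integrand are, by contrast, routine.
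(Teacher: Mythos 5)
Your derivation, up to the two limit passages, is the paper's own proof: you test \eqref{nikfo} with the same $G=(u_{\delta,R})^{-\beta}\phi$ from Lemma~\ref{nonex1}, and your Young-inequality choice $\tau(x)=\frac{p(x)}{p(x)-1}(\beta-\sigma(x))$ in Lemma~\ref{lemmjednor} is exactly the paper's $\varepsilon(x)=\frac{p(x)(\beta-\sigma(x))}{p(x)-1}$, so the constant in \eqref{caccc} comes out identically; your case distinction on the sign of $p(x)-\beta-1$ is the paper's splitting into $E_1,E_2,E_3$, and performing $R\to\infty$ before $\delta\to0$ (rather than after, as in the paper) is immaterial. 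The genuine gap is the step you yourself flag and postpone: the final passage $\delta\to0$ on the left-hand side is not an argument but a list of difficulties, and the mechanism you propose for resolving it cannot work.

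Concretely: the ``sign information carried by the differential inequality on $\{u=0\}$'' is \emph{unfavourable}. Testing \eqref{nikfo} with $w_\varepsilon=(1-u/\varepsilon)^{+}\phi$ and letting $\varepsilon\to0$ (using Lemma~\ref{crit}, which gives $\nabla u=0$ a.e.\ on $\{u=0\}$) yields $\int_{\{u=0\}}\Phi\phi\,dx\leqslant 0$, i.e.\ $\Phi\leqslant0$ a.e.\ on $\{u=0\}$. Thus your residual term $\delta^{-\beta}\int_{\{u=0\}}\Phi\phi\,dx$ is nonpositive and sits on the \emph{smaller} side of your inequality, so it cannot be discarded; when it is nonzero it drives your left-hand side to $-\infty$. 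Worse, the obstruction is essential rather than technical, because under the stated hypotheses the limit can genuinely fail. Take $\Omega=(-1,1)$, $p\equiv2$, $u(x)=\big((|x|-\tfrac12)^{+}\big)^{2}$, $\Phi=-2\chi_{\{|x|>1/2\}}$, $\sigma\equiv1$, $\beta=2$: then $u\in C^{1,1}$, $-\Delta u=\Phi$ holds weakly with equality, \eqref{sx} reads $\Phi u+|u'|^{2}=2u\geqslant0$, and \eqref{sbeta} holds. For $\phi=\xi^{2}$, where $\xi$ is a trapezoidal cutoff equal to $1$ on $(0.4,0.7)$ and supported in $(0.35,0.75)$, the admissibility condition holds since $\int|\phi'|^{2}\phi^{-1}\,dx=4\int|\xi'|^{2}\,dx<\infty$, and the right-hand side of \eqref{caccc} is finite (the left flank of $\xi$ lies inside $\{u=0\}$, where $\chi_{\{\nabla u\neq0\}}=0$, and the right flank stays away from $|x|=\tfrac12$); but the left-hand side of \eqref{caccc} equals $2\int u^{-2}\chi_{\{u>0\}}\phi\,dx=+\infty$ because $u$ vanishes quadratically at the free boundary, while your pre-limit left-hand side $\int_{\{u>0\}}(2u-2\delta)(u+\delta)^{-3}\phi\,dx$ tends to $-\infty$ (it scales like $-c\,\delta^{-3/2}$). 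Hence no combination of Fatou, monotone or dominated convergence can bridge your pre-limit inequality to \eqref{caccc}; the step closes only under extra sign assumptions such as $\Phi\geqslant0$ and $\sigma\geqslant0$, under which plain monotone convergence suffices. This is precisely the point at which the paper's own Step~2 is loose --- its monotonicity claim $\Phi\cdot(u+\delta)+\sigma|\nabla u|^{p(x)}\geqslant\Phi\cdot u+\sigma|\nabla u|^{p(x)}$ silently presumes $\Phi\geqslant0$ --- so your instinct that this limit is the hard part is exactly right, but your proposal leaves it open, and as stated it cannot be completed.
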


We call \eqref{caccc} Caccioppoli estimate,  because it involves $\nabla u$ on the left--hand side and, when we estimate $\chi_{\{\nabla u\neq 0\}}\leqslant 1$ on the right--hand side, then the  right--hand side depends only on $u$ (see e.g. \cite{cac,iwsbo}).

We note that we do not assume that the right--hand side in (\ref{caccc}) is finite.

The proof is based on the idea of the proof of Theorem~3.1 from~\cite{plap} whose further inspiration is the proof of Proposition~3.1 from~\cite{nonex}.

\begin{proof}[\textbf{Proof of Theorem \ref{cac}}] The proof follows by three steps.

\subsubsection*{Step 1. Derivation of a local inequality.}
We obtain the following lemma.
\begin{lem}\label{cacc1}We suppose that the measurable function $p:\Omega \rightarrow (1,\infty)$ satisfy~\eqref{P}, nonnegative $u
\in W^{1,p(x)}_{loc}(\Om)$ and $\Phi\in L^{1}_{loc}(\Om)$ satisfy PDI $-\Delta_{p(x)} u\geqslant \Phi$, in the sense of Definition~\ref{defnier}. 
 Assume further that $\be>0$ is arbitrary number and $\ve(x)$ is a bounded function with values separated from 0.

  Then, for every $0<\delta <R$, the inequality
\begin{eqnarray}\label{cacccs1}
 \hspace{-0.5cm}&\int_{\Omega }\Big(\Phi\cdot (u+\delta)+\big(\be - \frac{p(x)-1}{p(x)} \varepsilon(x) \big)|\nabla u|^{p(x)} \Big)
 (u+\delta)^{-\beta-1} \chi_{\{u\leqslant R-\delta \}} \cdot\phi~dx \quad&
 \\
	\hspace{-0.5cm}&\leqslant
\int_{\Omega } \frac{1}{p(x)\varepsilon(x)^{{p(x)-1}}}(u+\delta)^{p(x)-\be-1}
\chi_{\{ \nabla u\neq 0,\,
u\leqslant  R-\delta\}}\cdot|\nabla\phi|^{p(x)}\phi^{1-p(x)}\,dx +
C(\delta,R),&\nonumber
\end{eqnarray}
where \begin{eqnarray}
C(\delta,R)=&R^{-\be}\left[\int_{\Omega }|\nabla u|^{p(x)-2}\langle \nabla
u, \nabla {\phi}\rangle \chi_{
\{ \nabla u\neq 0,\, u> R-\delta \}} dx- \int_{\Omega}
\Phi \chi_{\{u>R-\delta\}} \phi dx\right]&\nonumber\\
\label{cdr}
\end{eqnarray}
 holds for every nonnegative Lipschitz function $\phi$ with
compact support in $\Om$.
\end{lem}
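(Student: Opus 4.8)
The plan is to insert the admissible test function $G=(u_{\delta,R})^{-\be}\phi$ from~\eqref{G} into the weak formulation~\eqref{nikfo} of the differential inequality and then to reorganise the resulting terms. By Lemma~\ref{nonex1} the function $G$ is a nonnegative, compactly supported element of $W^{1,p(x)}(\Om)$ (note $u_{\delta,R}\geqslant\de>0$), so it is a legitimate test function in Definition~\ref{defnier}, giving
\[
\int_\Om |\nabla u|^{p(x)-2}\langle\nabla u,\nabla G\rangle\,dx\geqslant\int_\Om \Phi\,G\,dx.
\]
First I would compute $\nabla G=-\be(u_{\delta,R})^{-\be-1}\phi\,\nabla u_{\delta,R}+(u_{\delta,R})^{-\be}\nabla\phi$. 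The key pointwise identity is $\nabla u_{\delta,R}=\nabla u\,\chi_{\{u\leqslant R-\de\}}$, where the ambiguity on the level set $\{u=R-\de\}$ is harmless because Lemma~\ref{crit} guarantees $\nabla u=0$ a.e.\ there. On $\{u\leqslant R-\de\}$ one has $u_{\delta,R}=u+\de$, while on $\{u>R-\de\}$ one has $u_{\delta,R}=R$; splitting both sides of the tested inequality according to these two regions already produces the bracket defining $C(\delta,R)$ in~\eqref{cdr} (the part supported on $\{u>R-\de\}$) together with the first gradient contribution $-\be\int|\nabla u|^{p(x)}(u+\de)^{-\be-1}\chi_{\{u\leqslant R-\de\}}\phi\,dx$.

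Rearranging, I would move the $\be$--term and the $\Phi$--term onto the left, using $\Phi(u+\de)^{-\be}=\Phi(u+\de)(u+\de)^{-\be-1}$, to reach
\[
\int_\Om\big(\Phi(u+\de)+\be|\nabla u|^{p(x)}\big)(u+\de)^{-\be-1}\chi_{\{u\leqslant R-\de\}}\phi\,dx\leqslant \int_\Om |\nabla u|^{p(x)-2}\langle\nabla u,\nabla\phi\rangle(u+\de)^{-\be}\chi_{\{u\leqslant R-\de\}}\,dx+C(\delta,R).
\]
It remains to estimate the mixed integral on the right. By Cauchy--Schwarz one has $|\nabla u|^{p(x)-2}\langle\nabla u,\nabla\phi\rangle\leqslant|\nabla u|^{p(x)-1}|\nabla\phi|$, and to the resulting integrand $|\nabla u|^{p(x)-1}|\nabla\phi|(u+\de)^{-\be}$ I would apply the pointwise Young inequality of Lemma~\ref{lemmjednor} with $\tau(x)=\varepsilon(x)$, choosing $s_2=|\nabla u|(u+\de)^{-(\be+1)/p(x)}\phi^{1/p(x)}$ and $s_1=(u+\de)^{1-(\be+1)/p(x)}|\nabla\phi|\phi^{1/p(x)-1}$. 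One checks that $s_1 s_2^{p(x)-1}$ collapses exactly to $|\nabla u|^{p(x)-1}|\nabla\phi|(u+\de)^{-\be}$ (all powers of $\phi$ cancel and the exponent of $(u+\de)$ sums to $-\be$), while the two Young terms are precisely $\tfrac{p(x)-1}{p(x)}\varepsilon(x)|\nabla u|^{p(x)}(u+\de)^{-\be-1}\phi$ and $\tfrac{1}{p(x)\varepsilon(x)^{p(x)-1}}(u+\de)^{p(x)-\be-1}|\nabla\phi|^{p(x)}\phi^{1-p(x)}$. Absorbing the first of these into the left--hand side turns the coefficient $\be$ into $\be-\tfrac{p(x)-1}{p(x)}\varepsilon(x)$ and leaves exactly the right--hand side of~\eqref{cacccs1}, where the indicator $\chi_{\{\nabla u\neq 0\}}$ may be inserted since the integrands vanish where $\nabla u=0$.

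I expect the main obstacle to be the bookkeeping around the truncation rather than any single estimate: one must justify the chain rule for $\nabla u_{\delta,R}$ and the vanishing of $\nabla u$ on $\{u=R-\de\}$ via Lemma~\ref{crit}, and one must split every integral consistently over $\{u\leqslant R-\de\}$ and $\{u>R-\de\}$ so that the terms surviving on the upper region assemble into $C(\delta,R)$ and nothing else. The second delicate point is engineering the Young splitting: since the mixed integrand carries no explicit factor of $\phi$, the factors $\phi^{1/p(x)}$ and $\phi^{1/p(x)-1}$ must be built into $s_2$ and $s_1$ so that they cancel in the product yet reproduce the $\phi$ and $\phi^{1-p(x)}$ weights demanded on the two sides of~\eqref{cacccs1}; verifying that the exponents of $(u+\de)$ and of $\phi$ balance is the computational heart of the argument.
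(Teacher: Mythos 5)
Your proposal is correct and follows essentially the same route as the paper: test the weak formulation with $G=(u_{\delta,R})^{-\be}\phi$, split over $\{u\leqslant R-\de\}$ and $\{u>R-\de\}$ to isolate $C(\de,R)$, bound the mixed term by $|\nabla u|^{p(x)-1}|\nabla \phi|$, and apply Lemma~\ref{lemmjednor} with $\tau=\ve$ before absorbing the $|\nabla u|^{p(x)}$ contribution into the left-hand side. The only cosmetic difference is that you fold the weight $(u+\de)^{-\be-1}\phi$ into the factors $s_1,s_2$ themselves, whereas the paper applies the Young-type lemma to $s_1=\frac{|\nabla\phi|}{\phi}(u+\de)$, $s_2=|\nabla u|$ under that weight; the resulting terms are identical.
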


\noindent
\begin{proof}[\textbf{ Proof of Lemma \ref{cacc1}}]

We take $w=G$ (see \eqref{G}) in the left side of the inequality \eqref{nikfo} and note that
\begin{eqnarray}
 \label{111}
L&:=& \int_\Om \Phi \cdot G\,dx = \int_\Om
\Phi\cdot (u_{\delta,R})^{-\be}\phi\,dx=\\
&=& \int_{\Oc\{u\leqslant R-\delta\}} \Phi \cdot(u+\delta)^{-\be}\phi
\,dx+
R^{-\be}\int_{\Oc\{u>R-\delta\}}\Phi \cdot\phi\,dx.\nonumber
\end{eqnarray}

On the other hand, inequality \eqref{nikfo} implies
\begin{eqnarray*}
L&:=& \int_{\Om} \Phi \cdot G\,dx \leqslant
\langle -\Delta_{p(x)} u, G\rangle =\int_{\Oc\{\nabla u\neq 0\}} |\nabla u|^{p(x)-2}\langle \nabla u, \nabla G\rangle\,  dx=\\
&\stackrel{}{=}& -\be\int_{\Oc\{ \nabla u\neq 0,\, u\leqslant
R-\delta\} } |\nabla u|^{p(x)}(u +\delta)^{-\be-1}\phi\,
dx +\\
&&+\int_{ \Oc\{ \nabla u\neq 0,\, u\leqslant R-\delta \} }|\nabla
u|^{p(x)-2}\langle \nabla u, \nabla {\phi}\rangle
(u+\delta)^{-\be}\,  dx +\\
&&+R^{-\be}\int_{\Oc \{ \nabla u\neq 0,\, u> R-\delta
\} }|\nabla u|^{p(x)-2}\langle \nabla u, \nabla {\phi}\rangle \,  dx.
\end{eqnarray*}
Note that all the above integrals are finite, what follows from Lemma~\ref{nonex1} (for $0 \leqslant u \leqslant R-\delta$ we have $\delta\leqslant u+\delta\leqslant R$). We compute further that
\begin{eqnarray*}
&&\int_{ \Oc\{ \nabla u\neq 0,\, u\leqslant R-\delta \} }|\nabla
u|^{p(x)-2}\langle \nabla u, \nabla {\phi}\rangle
(u+\delta)^{-\be}\,  dx \leqslant \\
&&\leqslant \int_{ \Oc\{
\nabla u\neq 0,\, u\leqslant R-\delta \}} | \nabla u|^{p(x)-1} |
\nabla \phi |{ (u+\delta)}^{-\be}\, dx=\\
&&=\int_{\sph\cap \{ \nabla u\neq 0,\, u\leqslant R-\delta \}}
 \Big(
\frac{|\nabla\phi|}{\phi} {(u+\delta)} \Big)\cdot | \nabla u|^{p(x)-1}
(u+\delta)^{-\be-1}\,\phi \ dx.
 \end{eqnarray*} We  apply  Lemma \ref{lemmjednor} with $s_1=\frac{|\nabla\phi|}{\phi}
{(u+\delta)}$,
$s_2=|\nabla u|$ and an arbitrary bounded and continuous function $\tau(x)=\varepsilon(x)>0$ with values separated from $0$,  to get
\begin{eqnarray*}
&&\int_{ \Oc\{ \nabla u\neq 0,\, u\leqslant R-\delta \} }|\nabla
u|^{p(x)-2}\langle \nabla u, \nabla {\phi}\rangle
(u+\delta)^{-\be}\,  dx \leqslant \\ 
&&\leqslant \int_{ \sph\cap \{  \nabla u\neq
0,\, u\leqslant R-\delta \}} \frac{p(x)-1}{p(x)} \varepsilon(x) |\nabla u|^{p(x)} (u+\delta)^{-\be-1}\phi \, dx+
\\&&+\int_{ \sph\cap \{ \nabla u\neq 0,\, u\leqslant R-\delta \}} \frac{1}{p(x)\varepsilon(x)^{p(x)-1}}
\Big(\frac{|\nabla\phi|}{\phi}\Big)^{p(x)}
(u+\delta)^{p(x)-\be-1}\phi\,  dx.
\end{eqnarray*}
Combining these estimates we {deduce} that
\begin{eqnarray*}
L&\leqslant&
\int_{\Oc\{ \nabla u\neq 0,\, u\leqslant
R-\delta\} }  \big(-\be+ \frac{p(x)-1}{p(x)}\varepsilon(x)\big)|\nabla u|^{p(x)}(u +\delta)^{-\be-1}\phi\,
dx +\\
&&+\int_{ \sph\cap \{ \nabla u\neq 0,\, u\leqslant R-\delta \}} \frac{1}{p(x)\varepsilon(x)^{p(x)-1}}
(u+\delta)^{p(x)-\be-1} |\nabla \phi|^{p(x)}\phi^{1-p(x)} \,  dx  +\\
&&+R^{-\be}\int_{\Oc \{ \nabla u\neq 0,\, u> R-\delta
\} }|\nabla u|^{p(x)-2}\langle \nabla u, \nabla {\phi}\rangle \,  dx.
\end{eqnarray*}
This and (\ref{111})
imply
 \begin{eqnarray*}
&&\int_{\Oc\{u\leqslant R-\delta\}} \Phi \cdot(u+\delta)^{-\be}\phi
\,dx+\\
&&+\int_{\Oc\{ \nabla u\neq 0,\, u\leqslant
R-\delta\} } \big(\be-\frac{p(x)-1}{p(x)}\varepsilon (x)\big)|\nabla u|^{p(x)}(u +\delta)^{-\be-1}\phi\,
dx \leqslant\\
&\leqslant&\int_{ \sph\cap \{ \nabla u\neq 0,\, u\leqslant R-\delta \}} \frac{1}{p(x)\varepsilon(x)^{p(x)-1}}
(u+\delta)^{p(x)-\be-1} |\nabla \phi|^{p(x)}\phi^{1-p(x)} \,  dx  +
C(\delta,R),
\end{eqnarray*}
 where $C(\delta,R)$ is given by \eqref{cdr}.\end{proof}

\begin{rem}\rm
Introduction of parameters $\delta$ and $R$ was necessary as we
needed to move some finite quantities in the estimates to opposite sides
of inequalities. 
\end{rem}

\subsubsection*{Step 2. Passing to the limit with $\delta\searrow 0$.}
We show that when $\be>0$ is an arbitrary number,  $\varepsilon(x)$ is a bounded function with values separated from $0$, such that $\be - \frac{p(x)-1}{p(x)} \varepsilon (x)=:\s(x)$,  then
 for any $R>0$
\begin{eqnarray}
\label{localcaccc}
&\int_{\Omega} \left(\Phi \cdot u
+\s(x)|\nabla
u|^{p(x)}\right) u^{-\be-1}
\chi_{\{0<u\leqslant R\}} \cdot
\phi\, dx& \\
&\leqslant
\int_{\Omega} \frac{1}{p(x)\varepsilon(x)^{p(x)-1}} u^{p(x)-\be-1}\chi_{\{\nabla u\neq 0,u\leqslant
R\}} \cdot|\nabla\phi|^{p(x)}\phi^{1-p(x)}\,dx
+C(R),&\nonumber
\end{eqnarray}
where
\begin{eqnarray*}
C(R)=& R^{-\be}\Big[\big|\int_{\Omega} |\nabla u|^{p(x)-2}| \nabla u|\chi_{\{
u\geqslant \frac{R}{2}\} }\cdot | \nabla
{\phi}| \, dx\big|+ \int_{\Omega}
\Phi \chi_{\{u\geqslant \frac{R}{2}\}} \cdot \phi\,dx\Big]&
\end{eqnarray*}
 holds for every nonnegative Lipschitz function $\phi$ with
compact support in $\Om$ such that the integral $\int_{\sph\cap \{\nabla u\neq
0\}}|\nabla\phi|^{p(x)}\phi^{1-p(x)}\,dx$ is finite.
Moreover, all  quantities appearing in
(\ref{localcaccc}) are finite.

We show first that under our assumptions, when
 $\delta \searrow 0$, we have \begin{eqnarray}\label{deltadozera}
&\int_{\Omega } \frac{1}{p(x)\varepsilon(x)^{p(x)-1}} (u+\delta)^{p(x)-\be-1}\chi_{\{ \nabla u\neq 0,\,u+\delta\leqslant R\}}\cdot|\nabla\phi|^{p(x)}\phi^{1-p(x)}\,dx\to\\
&\to\int_{\Omega}\frac{1}{p(x) \varepsilon(x)^{p(x)-1}} u^{p(x)-\be-1}\chi_{\{ \nabla u\neq 0,\,u\leqslant R\} }\cdot|\nabla\phi|^{p(x)}\phi^{1-p(x)}\,dx\nonumber
\end{eqnarray}
for every nonnegative Lipschitz function $\phi$ with
compact support in $\Om$ such that the integral $\int_{\sph\cap \{\nabla u\neq
0\}}|\nabla\phi|^{p(x)}\phi^{1-p(x)}\, dx$ is finite.

We note that $(u+\delta)^{p(x)-\be-1}\chi_{\{u+\delta\leqslant R\}}\stackrel{\delta\to 0}{\to}  u^{p(x)-\be-1}\chi_{\{u\leqslant R\}}$ a.e.  This follows from Lemma~\ref{crit} (which
gives that the set $\{u=0,\ |\nabla u| \neq 0\}$ is of measure zero) and the continuity outside
zero of the involved functions.

We show \eqref{deltadozera} independently on separate subsets of domains of integration. Hence, we have
 \begin{eqnarray*}
&\int_{\Omega\cap \{\nabla u\neq 0\}}\frac{1}{p(x)\varepsilon(x)^{p(x)-1}}(u+\delta)^{p(x)-\be-1}\chi_{\{ u+\delta\leqslant R\}}\cdot|\nabla\phi|^{p(x)}\phi^{1-p(x)}\,dx=&\\
&=\sum\limits_{i=1}^3\int_{E_{i} \cap \{\nabla u\neq 0\}}\frac{1}{p(x)\varepsilon(x)^{p(x)-1}} (u+\delta)^{p(x)-\be-1}\chi_{\{ u+\delta\leqslant R\}}\cdot|\nabla\phi|^{p(x)}\phi^{1-p(x)}\,dx,&
\end{eqnarray*}
 where
\begin{eqnarray*}
&E_{1}=\left\{ x \in \Omega: p(x)-\be-1=0 \right\},&\\
&E_{2}=\left\{x \in \Omega: p(x)-\be-1<0 \right\},&\\
&E_{3}=\left\{ x \in \Omega: p(x)-\be-1>0 \right\}.&
\end{eqnarray*}

Convergence on $E_1$ follows from the Lebesgue's Monotone Convergence Theorem, as on this set the only expression involving $\delta$ is the characteristic function $\chi_{\{u+\delta\leqslant R\}}$.

Let us concentrate on the case when  $\delta \searrow 0$ on $E_2$. We apply the Lebesgue's Monotone Convergence Theorem as on this set
\[(u+\delta)^{p(x)-\be-1}\chi_{\{u+\delta\leqslant R\}}\nearrow  u^{p(x)-\be-1}\chi_{\{u\leqslant R\}}.
\] Indeed, we note first that then for a.e. $x \in \Omega$ such that $u(x)>0$ we have that $u+\delta \searrow u$. Hence, also $(u+\delta)^{p(x)-\be-1}\nearrow u^{p(x)-\be-1}\neq 0$. Secondly, we observe that then
 for a.e. $x \in \Omega$ we have $\chi_{\{0\leqslant u\leqslant R-\delta\}} \leqslant \chi_{\{0 < u\leqslant R-\delta\}} \nearrow \chi_{\{0<u<R\}}$.

In the case of $E_3$, without loss of generality, we assume that $R>1$. Then we apply the Lebesgue's Dominated Convergence Theorem as
\begin{eqnarray*}
&\int_{E_3 \cap \{ \nabla u \neq 0\}} \frac{1}{p(x) \varepsilon(x)^{p(x)-1}}(u+\delta)^{p(x)-\be-1}
\chi_{\{u+\delta\leqslant R\}}
\cdot|\nabla\phi|^{p(x)}\phi^{1-p(x)}\,dx \leqslant&\\
&\leqslant R^{{p^+}-\be-1}\frac{\widetilde{\varepsilon}}{p^-} \int_{E_3 \cap \{ \nabla u \neq 0\}} \chi_{\{u \leqslant R\}}\cdot|\nabla\phi|^{p(x)}\phi^{1-p(x)}\,dx<\infty,&
\end{eqnarray*}
where $\widetilde{\varepsilon}=\sup_{x\in\Om}\left[\varepsilon(x)^{1-p(x)}\right].$ The details are left to the reader.

To complete the proof of Step 2 we note that \eqref{deltadozera} says that, when $\delta\searrow 0$, the first integral on the right--hand side of  \eqref{cacccs1} is convergent to the first integral of the right--hand side of \eqref{localcaccc}. To deal with the second expression note that for $\delta\leqslant\frac{R}{2}$, we
have
\begin{eqnarray*}
&|C(\delta,R)| \leqslant
\Big|R^{-\be}\int_{\Omega}|\nabla u|^{p(x)-2}\langle \nabla u, \nabla {\phi}\rangle \chi_{\{u> R-\delta\}}\,  dx \Big| +&\\
&\quad\quad+\Big| R^{-\be}\int_{\Omega}\Phi \chi_{\{u>R-\delta\}}\cdot\phi\,dx \Big|\leqslant
 C(R).&
\end{eqnarray*}

 It suffices now to pass to the limit with $\delta\searrow 0$ on the left--hand side of~\eqref{cacccs1}. We do it due to the Lebesgue's Monotone Convergence Theorem as the expression in brackets is nonnegative
and decreasing. Indeed, the condition \eqref{sx} implies
\begin{equation*}
\Phi\cdot (u+\delta)+\s (x)|\nabla u|^{p(x)} \geqslant\Phi\cdot u+\s (x)|\nabla u|^{p(x)}
\geqslant 0\quad \mathrm{a.e. \; on } \quad \Oc \{u>0\}.
\end{equation*}

\subsubsection*{ Step 3. We let
$R\to\infty$  and finish the proof.}
 Without loss of generality we can assume that the
integral in the right--hand side of \eqref{caccc} is finite, as
otherwise the inequality follows trivially.  Note  that since $|\nabla
u|^{p(x)-2}\langle\nabla u,\nabla\phi\rangle$ and $\Phi\phi$ are integrable  we have $\lim_{R\to \infty}
C(R)=0$.  Therefore, \eqref{caccc} follows from \eqref{localcaccc} by the Lebesgue's Monotone
Convergence Theorem (note that $\varepsilon(x)=\frac{p(x) (\be-\s (x))}{p(x)-1}$ by the choice of $\s(x)$).
\end{proof}

\section{General $p(x)$--Hardy inequality}\label{SecHar}

In the proof of $p(x)$--Hardy inequality we need the following lemma.

\begin{lem} \label{inequality2}
Let $p:\Omega \to (1,\infty)$ satisfy~\eqref{P} and $s_1, s_2 \geqslant 0$, then the following inequality holds for a.e. $x\in\Om$
 \begin{equation}
 \label{eqelem}
(s_1+s_2)^{p(x)}\leqslant 2^{(p(x)-1)\chi_{\{s_1\neq 0\}}} \left(s_1^{p(x)} + s_2^{p(x)}\right).
 \end{equation}
\end{lem}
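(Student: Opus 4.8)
The plan is to reduce the claim to the standard convexity inequality for the power function and to treat the value $s_1=0$ separately, since that is exactly the case in which the characteristic function $\chi_{\{s_1\neq 0\}}$ vanishes and forces the constant on the right-hand side to drop to $1$. Throughout, I fix a point $x\in\Omega$ at which \eqref{P} holds, so that $p(x)\in(1,\infty)$ is a genuine exponent; since this is so for a.e.\ $x$, the pointwise inequality I establish yields the a.e.\ statement.

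First I would dispose of the case $s_1=0$. Then $\chi_{\{s_1\neq 0\}}=0$, the right-hand side equals $2^{0}\,(0+s_2^{p(x)})=s_2^{p(x)}$, while the left-hand side is $(0+s_2)^{p(x)}=s_2^{p(x)}$, so \eqref{eqelem} holds with equality. This confirms that the exponent $(p(x)-1)\chi_{\{s_1\neq 0\}}$ is chosen precisely so that no loss occurs when $s_1=0$.

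Next, for $s_1\neq 0$ I have $\chi_{\{s_1\neq 0\}}=1$, so the claim becomes the familiar estimate
\[
(s_1+s_2)^{p(x)}\leqslant 2^{p(x)-1}\bigl(s_1^{p(x)}+s_2^{p(x)}\bigr).
\]
Here I would invoke convexity of the map $t\mapsto t^{p(x)}$ on $[0,\infty)$, valid because $p(x)\geqslant 1$. Applying the definition of convexity (equivalently, Jensen's inequality) at the midpoint gives
\[
\Bigl(\frac{s_1+s_2}{2}\Bigr)^{p(x)}\leqslant \frac{s_1^{p(x)}+s_2^{p(x)}}{2},
\]
and multiplying through by $2^{p(x)}$ yields exactly the displayed bound. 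Combining the two cases proves \eqref{eqelem}.

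The argument is entirely elementary, so there is no serious obstacle; the only point demanding care is the bookkeeping around the characteristic function. The reason one does not simply use the constant $2^{p(x)-1}$ everywhere is that \eqref{eqelem} is later applied with two separate nonnegative quantities $s_1$ and $s_2$, one of which is tied to the gradient of the exponent and hence vanishes wherever $p$ is locally constant. On that region keeping the constant equal to $1$ rather than $2^{p(x)-1}$ is what preserves sharpness and allows the constant-exponent results to be recovered without a spurious multiplicative loss. Isolating $s_1=0$ via $\chi_{\{s_1\neq 0\}}$ records this sharpness already at the level of the elementary lemma.
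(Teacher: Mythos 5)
Your proof is correct. The paper states this lemma without proof, treating it as elementary; your two-case argument (equality when $s_1=0$, midpoint convexity of $t\mapsto t^{p(x)}$ giving the constant $2^{p(x)-1}$ when $s_1\neq 0$) is exactly the standard argument the authors leave implicit, and your discussion of why the characteristic function is kept matches the paper's own Remark~\ref{remeqelem} on recovering the constant-exponent result without loss.
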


\begin{rem}\label{remeqelem}\rm
Note that in this lemma the role of $s_1$ is not the same as $s_2.$ If $s_1=0$, then \eqref{eqelem} becomes $s_2^{p(x)}=s_2^{p(x)}$. This is necessary to retrieve Theorem~4.1 from \cite{plap} (concerning constant exponent case) with the best constant via our investigations (see Theorem~\ref{theoplap} here).
\end{rem}

\bigskip

\noindent Now we state our main result.
\begin{theo} [$p(x)$--Hardy inequality]
\label{theoplapx} Let $\Omega\subseteq\mathbb{R}^n$ be an open subset not necessarily bounded and $p\in{\cal{P}}(\Omega)$.
Let nonnegative $u
\in W^{1,p(x)}_{loc}(\Om)$ and $\Phi\in L^{1}_{loc}(\Om)$ satisfy PDI $-\Delta_{p(x)} u\geqslant \Phi$, in the sense of Definition~\ref{defnier}.
Assume further that functions $u$, $\Phi$, $p(x)$, $\s(x)$ and a parameter $\beta>0$ satisfy crucial conditions~\eqref{sx}
and~\eqref{sbeta}. 

Then for every Lipschitz function $\xi$ with compact support in $\Om$ we have
\begin{equation}\label{hardypx}
\int_\Om \ |\xi|^{p(x)} \mu_{1,\beta}(dx)\leqslant \int_\Om |\nabla \xi|^{p(x)}\mu_{2,\beta}(dx)+\int_\Om \left|\xi {\log \xi } \right|^{p(x)}\cdot \frac{\left|\nabla p(x)\right|^{p(x)}}{{p(x)}^{p(x)}} \mu_{2,\beta}(dx),
\end{equation}
where
\begin{eqnarray}\label{mu1px}
&\mu_{1,\beta}(dx)&=\big(\Phi\cdot u+\sigma(x) |\nabla u|^{p(x)}\big)\cdot u^{-\be-1}\chi_{\{u>0\}}\ dx,
\\\label{mu2px}
&\mu_{2,\beta}(dx)&=   { \Big(\frac{p(x)-1}{\be-\s(x)}\Big)^{p(x)-1} } 2^{(p(x)-1)\chi_{\left\{|\nabla p|\neq 0\right\}}}u^{p(x)-\be-1}\chi_{\{|\nabla u|\neq 0\}}\ dx.\end{eqnarray}
\end{theo}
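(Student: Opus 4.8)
The plan is to derive the Hardy inequality \eqref{hardypx} from the Caccioppoli estimate \eqref{caccc} by a carefully chosen substitution. The left-hand side of \eqref{caccc} is exactly $\int_\Om |\xi|^{p(x)}\mu_{1,\beta}(dx)$ once we arrange for $\phi$ to equal $|\xi|^{p(x)}$ on $\{u>0\}$, so the natural ansatz is to set $\phi = |\xi|^{p(x)}$, or more precisely to relate $\phi$ to $\xi$ so that $\phi^{1/p(x)}$ behaves like $|\xi|$. First I would take $\xi$ to be a compactly supported Lipschitz function and define the test function $\phi$ in Theorem~\ref{cac} by $\phi = |\xi|^{p(x)}$ (checking that this $\phi$ is nonnegative, Lipschitz, compactly supported, and that the required integral $\int_{\sph}|\nabla\phi|^{p(x)}\phi^{1-p(x)}\,dx$ is finite, which is where the hypothesis $p\in\mathcal P(\Om)$ and the logarithmic term will enter).

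The heart of the argument is the computation of $\nabla\phi$ for $\phi=|\xi|^{p(x)}$ and the resulting estimate of $|\nabla\phi|^{p(x)}\phi^{1-p(x)}$. Writing $\phi = e^{p(x)\log|\xi|}$ and differentiating by the product/chain rule gives
\begin{equation*}
\nabla\phi = |\xi|^{p(x)}\Big(p(x)\frac{\nabla\xi}{\xi} + \log|\xi|\,\nabla p(x)\Big),
\end{equation*}
so that
\begin{equation*}
\frac{|\nabla\phi|}{\phi} = \Big| p(x)\frac{\nabla\xi}{\xi} + \log|\xi|\,\nabla p(x)\Big|.
\end{equation*}
Raising to the power $p(x)$, the factor $\phi^{1-p(x)}$ combines to leave $|\nabla\phi|^{p(x)}\phi^{1-p(x)} = \big(|\nabla\phi|/\phi\big)^{p(x)}\phi$, and after multiplying through by the prefactor $\frac{(p(x)-1)^{p(x)-1}}{p(x)^{p(x)}(\be-\s(x))^{p(x)-1}}\,u^{p(x)-\be-1}$ from \eqref{caccc} I would split the sum $p(x)\frac{\nabla\xi}{\xi}+\log|\xi|\,\nabla p(x)$ into its two terms. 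This is precisely where Lemma~\ref{inequality2} is invoked, with $s_1$ playing the role of the $\log|\xi|\,\nabla p$ term and $s_2$ the $p(x)\nabla\xi/\xi$ term: the elementary inequality $(s_1+s_2)^{p(x)}\le 2^{(p(x)-1)\chi_{\{s_1\neq0\}}}(s_1^{p(x)}+s_2^{p(x)})$ produces the two separate integrals on the right of \eqref{hardypx}, with the $2^{(p(x)-1)\chi_{\{|\nabla p|\neq 0\}}}$ factor of $\mu_{2,\beta}$ arising exactly because $s_1\neq 0$ corresponds to $\nabla p\neq 0$ (this explains Remark~\ref{remeqelem}).

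After this splitting, the term carrying $s_2 = p(x)|\nabla\xi|/|\xi|$ yields, upon cancellation of $p(x)^{p(x)}$ against the prefactor and recombination of the $\frac{|\xi|^{p(x)}}{|\xi|^{p(x)}}$ factors, the density $\big(\frac{p(x)-1}{\be-\s(x)}\big)^{p(x)-1}u^{p(x)-\be-1}|\nabla\xi|^{p(x)}$, which is the first term of \eqref{hardypx} against $\mu_{2,\beta}$; the term carrying $s_1=|\log|\xi||\,|\nabla p|$ yields $|\xi\log\xi|^{p(x)}\frac{|\nabla p|^{p(x)}}{p(x)^{p(x)}}$ times the same density, matching the second term. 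I would take care that $\chi_{\{|\nabla u|\neq 0\}}$ and $\chi_{\{u>0\}}$ appearing in the measures are consistent with the characteristic functions in \eqref{caccc}, and that on the support of $\phi$ we have $\xi\neq 0$ so the divisions by $\xi$ and $\log\xi$ are justified a.e.

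The main obstacle I expect is the regularity and integrability bookkeeping for $\phi=|\xi|^{p(x)}$: verifying that $\phi$ is an admissible test function (Lipschitz with compact support and finite $\int_{\sph}|\nabla\phi|^{p(x)}\phi^{1-p(x)}\,dx$) requires controlling the logarithmic blow-up $\log|\xi|$ near the zero set of $\xi$ and the product $\log|\xi|\,\nabla p$, which is exactly where the assumptions $p^{p(x)},|\nabla p|^{p(x)}\in L^1_{loc}$ and $p\in W^{1,1}_{loc}$ from $\mathcal P(\Om)$ are needed. A clean way to handle this is to first prove the inequality for $\xi$ bounded away from $0$ on its support (or to work with a regularized $\phi=(|\xi|+\epsilon)^{p(x)}$ or $\max(|\xi|,\epsilon)$) and then pass to the limit, using that the logarithmic term is integrable against the finite measures $\mu_{2,\beta}$. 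Once the substitution is justified and $\nabla\phi$ is correctly expanded, the remaining manipulations are the routine algebraic simplifications indicated above.
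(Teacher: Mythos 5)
Your proposal follows essentially the same route as the paper's own proof: substitute $\phi=|\xi|^{p(x)}$ (the paper phrases it as $\xi=\phi^{1/p(x)}$) into the Caccioppoli estimate of Theorem~\ref{cac}, expand $\nabla\phi$ by the chain rule, and apply Lemma~\ref{inequality2} with exactly the same assignment $s_1\sim\log|\xi|\,|\nabla p|$, $s_2\sim p(x)\nabla\xi$, which yields the two right--hand terms and the factor $2^{(p(x)-1)\chi_{\{|\nabla p|\neq 0\}}}$ in $\mu_{2,\beta}$. The admissibility and regularization issue you flag for $\phi=|\xi|^{p(x)}$ is genuine, but the paper's proof glosses over it as well, so your treatment is, if anything, more careful than the original.
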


\begin{proof}
We are going to apply Theorem \ref{cac} and, after substituting a certain form of function $\phi$, we estimate the right--hand side of \eqref{caccc}.

We take  $\xi (x)=(\phi(x))^\frac{1}{p(x)}$. Then whenever $\phi>0$, we have
\[\nabla \xi=\frac{1}{p(x)} \phi^{\frac{1}{p(x)}-1}\nabla \phi-\frac{\log \phi}{p^2(x)} \phi^{\frac{1}{p(x)}}\nabla p(x).\]
Equivalently, we have
\begin{equation}\label{phiphi}
\phi^{\frac{1}{p(x)}-1}\nabla \phi=p(x)\nabla \xi+\frac{\log \phi}{p (x)} \phi^{\frac{1}{p(x)}}\nabla p(x).
\end{equation}
We observe that
\[\left\{\frac{\log \phi}{p (x)} \phi^{\frac{1}{p(x)}}\left|\nabla p(x)\right|\neq 0\right\}\subseteq \left\{|\nabla p(x)|\neq 0\right\}=: P.\]
We apply Lemma~\ref{inequality2} to \eqref{phiphi} (with $s_1=\frac{\log \phi}{p (x)} \phi^{\frac{1}{p(x)}}\left|\nabla p(x)\right|$ and $s_2=p(x)\nabla \xi$) to get
\begin{eqnarray}\nonumber\left|  \phi^{\frac{1}{p(x)}-1}\nabla \phi\right|^{p(x)}=\left|p(x)\nabla \xi+\frac{\log \phi}{p (x)} \phi^{\frac{1}{p(x)}}\nabla p(x)\right|^{p(x)}\leqslant\\
\leqslant  2^{(p(x)-1)\chi_P}\left|p(x)\nabla \xi\right|^{p(x)}+  2^{(p(x)-1)\chi_P}\left|\frac{\log \phi}{p(x)} \phi^{\frac{1}{p(x)}}\nabla p(x)\right|^{p(x)} .\label{V}\end{eqnarray}
We substitute  $\xi^{p(x)}=\phi$ on the right--hand side of \eqref{V} to obtain
\begin{eqnarray}
&\left|\nabla \phi\right|^{p(x)}\phi^{1-{p(x)}}= \nonumber\left|\phi^{\frac{1}{p(x)}-1}\nabla \phi\right|^{p(x)}\leqslant \\
\nonumber&\leqslant  2^{(p(x)-1)\chi_P}\left|p(x)\nabla \xi \right|^{p(x)}+  2^{(p(x)-1)\chi_P}\left|\frac{\log (\xi^{p(x)})}{p(x)} \xi\nabla p(x)\right|^{p(x)} =\\
&=   2^{(p(x)-1)\chi_P}\left|p(x)\nabla \xi\right|^{p(x)}+  2^{(p(x)-1)\chi_P}\left|\xi{\log \xi} \nabla p(x)\right|^{p(x)} .\label{poszac}
\end{eqnarray}
We recall that $\mu_{1,\beta}$  is given in \eqref{mu1px} and let us denote $\mu$ as follows
\[\mu(dx)= \frac{(p(x)-1)^{p(x)-1}}{{p(x)}^{p(x)} (\be-\s (x))^{p(x)-1}} u^{p(x)-\be-1}\chi_{\{|\nabla u|\neq 0\}}\ dx.\]
Applying \eqref{poszac}, we get   \begin{eqnarray*}
&\int_{\Om}\  |\nabla\phi|^{p(x)}\phi^{1-{p(x)}}\ \mu(dx)= \int_{\Om} \,\left|\phi^{\frac{1}{p(x)}-1}\nabla \phi\right|^{p(x)}\,\mu(dx) \leqslant\\
&\leqslant \int_{\Om}\  2^{(p(x)-1)\chi_P}\left(\left|p(x)\nabla \xi\right|^{p(x)} + \left|\xi{\log \xi} \nabla p(x)\right|^{p(x)} \right) \mu(dx) =\\
&=\int_{\Om} \left|\nabla \xi\right|^{p(x)}\mu_{2,\beta}(dx)+ \int_{\Om}\left|\xi{\log \xi} \right|^{p(x)}  \frac{|\nabla p(x)|^{p(x)}}{p(x)^{p(x)}} \, \mu_{2,\beta}(dx),
\end{eqnarray*}
 where $\mu_{2,\beta}(dx)$ is given by \eqref{mu2px}.

  Summing up, by Theorem \ref{cac}, we obtain \begin{eqnarray*}&\int_{\Om} \xi^{p(x)} \mu_{1,\beta}(dx) =\int_{\Om} \phi\ \mu_{1,\beta}(dx) \leqslant \int_{\Om}  |\nabla\phi|^{p(x)}\phi^{1-{p(x)}} \ \mu(dx) \leqslant&\\
&\leqslant \int_{\Om} \left|\nabla \xi\right|^{p(x)}\mu_{2,\beta}(dx)+ \int_{\Om}\left|\xi{\log \xi} \right|^{p(x)} \frac{\left|\nabla p(x)\right|^{p(x)}}{{p(x)}^{p(x)}} \mu_{2,\beta}(dx),&\end{eqnarray*}
 which completes the proof.
\end{proof}

\section{One--dimensional case}\label{Sec1d}

This section is devoted to the case when $\Omega=I\subseteq\R$ is an open interval (not necessarily finite). We give here a few original examples indicating that our conditions on admissible functions $p(x)$ are not very restrictive.

Let us start with the following direct corollary of  Theorem~\ref{theoplapx}.

\begin{coro}\label{coropx4} Suppose $I\subseteq(-M,M)\subseteq\R$, with some $M>0$,  is a bounded open subset, $u=M-|x|$, and $p \in \mathcal{P}(I)$. Assume further that nonnegative $\s(x)$ and $\beta>0$ satisfy crucial condition~\eqref{sbeta}.

Then for every Lipschitz function $\xi$ with compact support in $I$, we have
\begin{equation*}
\int_{I}\ |\xi|^{p(x)} \mu_{1,\beta}(dx)\leqslant \int_{I}| \xi'|^{p(x)}\mu_{2,\beta}(dx)+\int_{I} \left|\xi{\log \xi} \right|^{p(x)} \frac{|p'(x)|^{p(x)}}{p(x)^{p(x)}}\ \mu_{2,\beta}(dx),
\end{equation*}
where
\begin{eqnarray*}
&\mu_{1,\beta}(dx)&= (M-|x|)^{-\beta-1}\sigma(x)\ dx,
\\\label{mu2px4}
&\mu_{2,\beta}(dx)&=  (M-|x|)^{p(x)-\beta-1} \left[2 \cdot \frac{p(x)-1}{\beta-\sigma(x)}\right]^{p(x)-1} \ dx.
\end{eqnarray*}
\end{coro}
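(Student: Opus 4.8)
The plan is to apply Theorem~\ref{theoplapx} directly to the explicit function $u(x)=M-|x|$ on $\Om=I$, with the choice $\Phi\equiv 0$. First I would record the elementary facts about this $u$. Since $I\subseteq(-M,M)$ we have $u(x)=M-|x|>0$ throughout $I$, and $u$ is Lipschitz, hence nonnegative and $u\in W^{1,p(x)}_{loc}(I)$. Away from the single point $x=0$ one has $u'(x)=-\mathrm{sgn}(x)$, so $|u'(x)|=1$ for a.e.\ $x$; consequently $|\na u|^{p(x)}=1$ a.e.\ and $\chi_{\{|\na u|\neq 0\}}=1$ a.e.

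The main point to verify is that $u$ is a weak solution of the PDI $-\Delta_{p(x)}u\geqslant 0$ in the sense of Definition~\ref{defnier}, i.e.\ that $\Phi\equiv 0$ is admissible. Since $|u'|^{p(x)-2}u'=-\mathrm{sgn}(x)$, for any nonnegative Lipschitz $w$ with compact support in $I$ a direct integration by parts on each component of $I$ yields $\int_I|u'|^{p(x)-2}u'\,w'\,dx=2w(0)\geqslant 0$ (this contribution is simply $0$ on components not containing the origin). Thus $\langle-\Delta_{p(x)}u,w\rangle=2w(0)\geqslant 0=\int_I\Phi w\,dx$, so the differential inequality holds with $\Phi\equiv 0$. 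This is the only slightly delicate step, since it is precisely the kink of $|x|$ at the origin that supplies the (nonnegative) distributional contribution, and I expect it to be the main obstacle, albeit a routine one. With $\Phi\equiv 0$ the crucial conditions are then immediate: \eqref{sx} reduces to $\s(x)|\na u|^{p(x)}=\s(x)\geqslant 0$, which holds because $\s$ is assumed nonnegative, while \eqref{sbeta} is part of the hypotheses.

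Having checked all hypotheses, I would invoke Theorem~\ref{theoplapx} and simplify the measures \eqref{mu1px}--\eqref{mu2px} using $\Phi\equiv 0$, $|\na u|=1$ and $u=M-|x|$. This gives $\mu_{1,\beta}(dx)=\s(x)(M-|x|)^{-\be-1}\,dx$ at once, and for the second measure $\big(\tfrac{p(x)-1}{\be-\s(x)}\big)^{p(x)-1}2^{(p(x)-1)\chi_{\{|p'|\neq 0\}}}(M-|x|)^{p(x)-\be-1}\,dx$. Finally, since $p(x)-1>0$ and $\chi_{\{|p'|\neq 0\}}\leqslant 1$, we have $2^{(p(x)-1)\chi_{\{|p'|\neq 0\}}}\leqslant 2^{p(x)-1}$, so this measure is dominated by the measure $\mu_{2,\beta}$ stated in the corollary; as the integrands $|\xi'|^{p(x)}$ and $|\xi\log\xi|^{p(x)}|p'|^{p(x)}/p(x)^{p(x)}$ on the right--hand side of \eqref{hardypx} are nonnegative, enlarging $\mu_{2,\beta}$ only increases the right--hand side and hence preserves the inequality. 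Collecting these identities together with this estimate turns \eqref{hardypx} into the asserted inequality, completing the proof.
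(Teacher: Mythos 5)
Your proposal is correct and takes essentially the same route as the paper: the paper's proof likewise applies Theorem~\ref{theoplapx} to $u=M-|x|$ with $\Phi\equiv 0$, justifying admissibility via $u''=-2\delta_0$ (the distributional form of your integration-by-parts identity $\langle-\Delta_{p(x)}u,w\rangle=2w(0)\geqslant 0$), and then finishes with the same direct computation of the measures, including the estimate $2^{(p(x)-1)\chi_{\{p'\neq 0\}}}\leqslant 2^{p(x)-1}$. The only point to tidy is that Definition~\ref{defnier} requires the test inequality for all nonnegative compactly supported $w\in W^{1,p(x)}(I)$, not only Lipschitz $w$; in one dimension such $w$ have absolutely continuous representatives, so your computation carries over verbatim.
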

\begin{proof} We apply  $u=M-|x|$ on $\Omega=I=(-M,M)$ in Theorem~\ref{theoplapx}. In this case $u'=-\,sgn(x)$, $u''\equiv 0$ outside $0$ and $u''(0)=-2\delta_0$ is $-2$ times Dirac delta. It enables to choose $\Phi\equiv 0$ in the PDI $-\Delta_{p(x)}u\geqslant \Phi$ due to Definition~\ref{defnier}. Direct computations finish the proof.
\end{proof}

When the considered solution to nonlinear problem is more regular, one--dimesional version of~Theorem~\ref{theoplapx} can be reduced in the following way.
\begin{theo}[One--dimensional inequality]
\label{theo-one-dim}

Let $I \subseteq \R$, $p \in \mathcal{P}(I)$, and $u
\in W^{1,p(x)}_{loc}(I)\cap W^{2,1}_{loc}(I)$ be a nonnegative function, such that $|u'|^{p(x)-2}u'\in W^{1,1}_{loc}(I)$. Assume further that $\s(x)$ and $\beta>0$ satisfy crucial condition~\eqref{sbeta} and the following condition is satisfied
\begin{equation}
\label{g-one-dim} g(x):=\sigma(x) (u')^2-p'(x) uu'\log |u'|-(p(x)-1) uu'' \geqslant 0 \quad a.e.\ x\in I.
\end{equation}

Then, for every Lipschitz function $\xi$ with compact support in $I$, we have
\begin{equation}\label{hardypx2}
\int_I \ |\xi|^{p(x)} \mu_{1,\beta}(dx)\leqslant \int_I |\xi'|^{p(x)}\mu_{2,\beta}(dx)+\int_I \left|\xi{\log \xi} \right|^{p(x)}   \frac{\left|p'(x)\right|^{p(x)}}{{p(x)}^{p(x)}}\,\mu_{2,\beta}(dx),
\end{equation}
where
\begin{eqnarray*}
&\mu_{1,\beta}(dx)&=\frac{|u'|^{p(x)-2}}{u^{\be+1}} g(x) \chi_{\{u>0\}} dx,
\\
&\mu_{2,\beta}(dx)&=    \left(\frac{p(x)-1}{\be-\s(x)}\right)^{p(x)-1}  2^{(p(x)-1)\chi_{\{p'\neq 0\}}}u^{p(x)-\be-1}\chi_{\{| u'|\neq 0\}}\ dx.
\end{eqnarray*}
\end{theo}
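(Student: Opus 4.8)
The plan is to derive Theorem~\ref{theo-one-dim} as a direct specialization of the general $p(x)$--Hardy inequality in Theorem~\ref{theoplapx} by exhibiting an explicit choice of $\Phi$ that makes the abstract crucial conditions reduce to the concrete hypothesis~\eqref{g-one-dim}. The key observation is that under the extra regularity $u\in W^{2,1}_{loc}(I)$ with $|u'|^{p(x)-2}u'\in W^{1,1}_{loc}(I)$, the $p(x)$--Laplacian of $u$ can be computed classically in one dimension, so that $-\Delta_{p(x)}u$ becomes an honest locally integrable function rather than merely a distribution satisfying an inequality. First I would compute $\Delta_{p(x)}u=\bigl(|u'|^{p(x)-2}u'\bigr)'$ using the product and chain rules, carefully handling the differentiation of the variable exponent. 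Since $\tfrac{d}{dx}|u'|^{p(x)-2}$ produces both a term with $p'(x)$ (from differentiating the exponent, which brings down a $\log|u'|$ factor) and a term with $u''$ (from differentiating $|u'|$), I expect to obtain something of the shape
\[
-\Delta_{p(x)}u = -|u'|^{p(x)-2}\Bigl[(p(x)-1)u'' + p'(x)\,u'\log|u'|\Bigr].
\]

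Next I would take this exact expression as the function $\Phi$ in Definition~\ref{defnier}, so that the PDI $-\Delta_{p(x)}u\geqslant\Phi$ holds with equality. The point is then to verify the crucial condition~\eqref{sx}, namely $\Phi\cdot u+\sigma(x)|\nabla u|^{p(x)}\geqslant 0$. Substituting the computed $\Phi$ and writing $|\nabla u|^{p(x)}=|u'|^{p(x)}=|u'|^{p(x)-2}(u')^2$, I would factor out the common nonnegative quantity $|u'|^{p(x)-2}$ and recognize that the bracketed expression is precisely $g(x)$ from~\eqref{g-one-dim}:
\[
\Phi\cdot u+\sigma(x)|u'|^{p(x)}
= |u'|^{p(x)-2}\Bigl[\sigma(x)(u')^2 - p'(x)uu'\log|u'| - (p(x)-1)uu''\Bigr]
= |u'|^{p(x)-2}g(x).
\]
Thus~\eqref{sx} is equivalent to $g(x)\geqslant 0$, which is exactly the assumed~\eqref{g-one-dim}. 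Condition~\eqref{sbeta} is assumed directly in the statement, so both crucial conditions are in force.

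Finally I would invoke Theorem~\ref{theoplapx} and simplify the measures. Plugging $\Phi\cdot u+\sigma(x)|u'|^{p(x)} = |u'|^{p(x)-2}g(x)$ into the definition~\eqref{mu1px} of $\mu_{1,\beta}$ gives the claimed $\mu_{1,\beta}(dx)=|u'|^{p(x)-2}u^{-\be-1}g(x)\chi_{\{u>0\}}\,dx$. The measure $\mu_{2,\beta}$ carries over verbatim from~\eqref{mu2px}, once I note that in one dimension $|\nabla u|\neq 0$ reads $|u'|\neq 0$ and $|\nabla p|\neq 0$ reads $p'\neq 0$, and that $|\nabla\xi|^{p(x)}=|\xi'|^{p(x)}$, $|\nabla p|^{p(x)}=|p'|^{p(x)}$ in the inequality itself. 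I anticipate that the main obstacle is the careful computation of $\Delta_{p(x)}u$, specifically justifying the differentiation of $|u'|^{p(x)-2}$ with respect to the variable exponent and confirming that the regularity hypotheses $u\in W^{2,1}_{loc}$ and $|u'|^{p(x)-2}u'\in W^{1,1}_{loc}$ are exactly what is needed to make $\Phi$ locally integrable and the weak formulation~\eqref{nikfo} legitimate; one must also check that the integrability condition $\int\Phi w\,dx>-\infty$ of Definition~\ref{defnier} holds, which follows from $|u'|^{p(x)-2}u'\in W^{1,1}_{loc}(I)$ together with $p\in\mathcal{P}(I)$. The remaining algebraic simplifications of the measures are routine.
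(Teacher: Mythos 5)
Your proposal is correct and follows essentially the same route as the paper's own proof: compute $-\Delta_{p(x)}u=-|u'|^{p(x)-2}\bigl[p'(x)u'\log|u'|+(p(x)-1)u''\bigr]$ classically (a.e.\ on $I$), take this expression as $\Phi$ in Definition~\ref{defnier}, observe that condition~\eqref{sx} reduces to $|u'|^{p(x)-2}g(x)\geqslant 0$, i.e.\ to~\eqref{g-one-dim}, and then apply Theorem~\ref{theoplapx} and simplify the measures. In fact your write-up is more explicit than the paper's (which compresses the verification of~\eqref{sx}, the role of the hypothesis $|u'|^{p(x)-2}u'\in W^{1,1}_{loc}(I)$ in justifying the weak formulation, and the simplification of $\mu_{1,\beta}$ into the phrase ``direct computations''), so no gap remains.
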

\begin{proof}
It suffices to apply Theorem~\ref{theoplapx} with $u=u(x)$, $x\in I$. Suppose $\widetilde{I}$ is the set where $u''$ is well defined, then
\begin{eqnarray*}
\Delta_{p(x)} u=(|u'|^{p(x)-2}u')'=(|u'|^{p(x)-2})'u'+|u'|^{p(x)-2}u''\quad \mathrm{on} \quad \widetilde{I}
\end{eqnarray*}
and thus
\begin{eqnarray*}
-\Delta_{p(x)} u=-|u'|^{p(x)-2} \left[p'(x) \cdot u'\log |u'|+(p(x)-2) u''  +u''\right]\ \mathrm{on} \ \widetilde{I}.
\end{eqnarray*}
We set
\begin{equation*}
\Phi=\left\{\begin{array}{lcl}
-\Delta_{p(x)} u&\mathrm{if}&u\in\widetilde{I},\\
0&\mathrm{if}&u\in I \setminus\widetilde{I},
\end{array}\right.
\end{equation*}
which satisfies all the restrictions of Theorem~\ref{theoplapx}.
Direct computations gives inequality \eqref{hardypx2}.
\end{proof}

We illustrate the above theorem by several examples. We give below the inequality with power--type weights, where we allow  $I=(0,\infty)$.
\begin{coro}\label{coropx45} Suppose $I \subseteq\R_+$ is an open subset,  $p \in \mathcal{P}(I)$, and $\alpha\in\R$ is an arbitrary number.
Assume further that $\s(x)$ and $\beta>0$ satisfy crucial condition~\eqref{sbeta} and the following condition is satisfied
 \begin{equation}
\label{g(x)kol} \overline{g}(x):= \sigma(x)\alpha^2 -  p'(x) x \alpha\log|\alpha x^{\alpha-1}|+(p(x)-1) \alpha(1-\alpha)\geqslant 0\ \ a.e.\ x\in I.
\end{equation}

Then, for every Lipschitz function $\xi$ with compact support in $ I$, we have
\begin{equation}\label{hardypx8}
\int_{I}\ |\xi|^{p(x)} \mu_{1,\beta}(dx)\leqslant \int_{I}| \xi'|^{p(x)}\mu_{2,\beta}(dx)+\int_{I} \left|\xi{\log \xi} \right|^{p(x)} \frac{|p'(x)|^{p(x)}}{p(x)^{p(x)}}\ \mu_{2,\beta}(dx),
\end{equation}
where
\begin{eqnarray*}\label{mu1px8}
&\mu_{1,\beta}(dx)&= | \alpha|^{p(x)-2} x^{\alpha (p(x)-\beta-1)-p(x)} \cdot \overline{g}(x) \ dx,
\\\label{mu2px8}
&\mu_{2,\beta}(dx)&=   x^{\alpha (p(x)-\beta-1)} \left(2 \cdot \frac{p(x)-1}{\beta-\sigma(x)}\right)^{p(x)-1} \ dx.
\end{eqnarray*}
\end{coro}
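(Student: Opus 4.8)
The plan is to specialize Theorem~\ref{theo-one-dim} to the explicit power function $u(x)=x^\al$ on the interval $I\subseteq\rp$, so the main work is a bookkeeping computation rather than a new analytic argument. First I would verify that $u(x)=x^\al$ meets the regularity hypotheses of Theorem~\ref{theo-one-dim}: on $I\subseteq\rp$ the function $x^\al$ is smooth, hence certainly lies in $W^{1,p(x)}_{loc}(I)\cap W^{2,1}_{loc}(I)$, and $|u'|^{p(x)-2}u'$ is smooth away from the origin, so it belongs to $W^{1,1}_{loc}(I)$ as required. I would then record the elementary derivatives $u'=\al x^{\al-1}$ and $u''=\al(\al-1)x^{\al-2}$, noting that on $\rp$ we have $u>0$ everywhere and $u'\ne 0$ whenever $\al\ne 0$ (the degenerate case $\al=0$ makes $u$ constant and the inequality trivial, so I would tacitly assume $\al\ne 0$).

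Next I would substitute these expressions into the quantity $g(x)$ defined in~\eqref{g-one-dim} and check that it reduces exactly to $\overline{g}(x)$ in~\eqref{g(x)kol} up to the factor $x^{2(\al-1)}$. Concretely, $\s(x)(u')^2=\s(x)\al^2 x^{2(\al-1)}$, the cross term is $-p'(x)\,u\,u'\log|u'| = -p'(x)\,x^\al\,\al x^{\al-1}\log|\al x^{\al-1}| = -p'(x)\,\al\,x^{2\al-1}\log|\al x^{\al-1}|$, and the curvature term is $-(p(x)-1)\,u\,u'' = -(p(x)-1)\,x^\al\,\al(\al-1)x^{\al-2} = (p(x)-1)\,\al(1-\al)\,x^{2\al-2}$. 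Factoring out $x^{2(\al-1)}=x^{2\al-2}$ from all three terms, and observing that the middle term carries an extra factor $x$ (namely $x^{2\al-1}=x\cdot x^{2\al-2}$), yields $g(x)=x^{2(\al-1)}\overline{g}(x)$, where $\overline{g}$ is precisely the bracketed expression of~\eqref{g(x)kol}. Hence the sign condition $g(x)\geqslant 0$ of Theorem~\ref{theo-one-dim} is equivalent to the hypothesis $\overline{g}(x)\geqslant 0$ assumed here.

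It then remains to translate the measures $\mu_{1,\be}$ and $\mu_{2,\be}$ from Theorem~\ref{theo-one-dim} into their power form. For $\mu_{2,\be}$ I would simply insert $u^{p(x)-\be-1}=x^{\al(p(x)-\be-1)}$ and note that $\chi_{\{|u'|\ne 0\}}=1$ and $\chi_{\{p'\ne 0\}}$ matches the characteristic function appearing in~\eqref{hardypx8}; absorbing the factor $2^{(p(x)-1)\chi_{\{p'\ne 0\}}}$ into the bracket gives $\big(2\cdot\tfrac{p(x)-1}{\be-\s(x)}\big)^{p(x)-1}$ on the set where $p'\ne 0$, which is the stated form (on the set $p'=0$ the logarithmic term vanishes, so the value of this constant there is immaterial). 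For $\mu_{1,\be}$ I would compute $|u'|^{p(x)-2}u^{-\be-1}=|\al x^{\al-1}|^{p(x)-2}x^{-\al(\be+1)}=|\al|^{p(x)-2}x^{(\al-1)(p(x)-2)-\al(\be+1)}$ and then multiply by the $g(x)=x^{2(\al-1)}\overline{g}(x)$ just obtained. Collecting the exponents of $x$ gives $(\al-1)(p(x)-2)+2(\al-1)-\al(\be+1)=(\al-1)p(x)-\al\be-\al=\al(p(x)-\be-1)-p(x)$, matching the exponent in $\mu_{1,\be}(dx)$ of the statement.

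The main obstacle, if any, is the careful tracking of exponents of $x$ across the three terms of $g$ and the combination with the $|u'|^{p(x)-2}u^{-\be-1}$ prefactor in $\mu_{1,\be}$; a slip in collecting powers is the only realistic source of error, since no new analytic idea is needed beyond the direct substitution into the already-established Theorem~\ref{theo-one-dim}. I would present these as ``direct computations,'' mirroring the style of the preceding corollaries, and conclude by invoking Theorem~\ref{theo-one-dim} to obtain inequality~\eqref{hardypx8}.
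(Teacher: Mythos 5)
Your proposal is correct and follows exactly the paper's route: the paper likewise applies Theorem~\ref{theo-one-dim} with $u=x^{\alpha}$, records $u'=\alpha x^{\alpha-1}$, $u''=\alpha(\alpha-1)x^{\alpha-2}$, obtains $g(x)=x^{2\alpha-2}\,\overline{g}(x)\geqslant 0$ from~\eqref{g(x)kol}, and leaves the measure identification to ``direct computations,'' which you carry out explicitly and correctly (including the exponent collection $\alpha(p(x)-\beta-1)-p(x)$). Your only imprecisions are cosmetic: $|u'|^{p(x)-2}u'$ is not literally smooth since $p$ is only $W^{1,1}_{loc}$ (though it is in $W^{1,1}_{loc}$ by the product rule), and on $\{p'=0\}$ the replacement of $2^{(p(x)-1)\chi_{\{p'\neq 0\}}}$ by $2^{p(x)-1}$ is justified because enlarging a right--hand--side measure preserves the inequality, not only because the logarithmic term vanishes there.
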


\begin{proof}
 We apply Theorem \ref{theo-one-dim} with the function $u=x^{\alpha}$.
We note that $u'=\alpha x^{\alpha-1}$ and $u''=\alpha (\alpha-1) x^{\alpha-2}$ and thus  according to \eqref{g-one-dim} we have
\[g(x)=x^{2\alpha-2} \Big[\s (x)\alpha^2-p'(x) x \alpha \log |\alpha x^{\alpha-1}| +(p(x)-1)\alpha(1-\alpha)\Big],\]
which is nonnegative due to \eqref{g(x)kol}.
 Direct computations gives inequality \eqref{hardypx8} with the desired measures.
\end{proof}

 As an another example we give the following inequality, where we allow  $I=(0,\infty)$.
\begin{coro}\label{coropx454} Suppose $I \subseteq\R_+$ is an open subset,  $p \in \mathcal{P}(I)$, and $a>0$ is an arbitrary number.
Assume further that $\s(x)$ and $\beta>0$ satisfy condition~\eqref{sbeta} and the following condition is satisfied\[\overline{g}(x):= \sigma(x) + p'(x)x   \log \frac{a}{x^2} -2p(x)+2 \geqslant 0\quad a.e.\ in\ I.\]

Then for every Lipschitz function $\xi$ with compact support in $I$, we have
\begin{equation}\label{hardypx9}
\int_{I}\ |\xi|^{p(x)} \mu_{1,\beta}(dx)\leqslant \int_{I}| \xi' |^{p(x)}\mu_{2,\beta}(dx)+\int_{I} \left|\xi  \log \xi \right|^{p(x)} \frac{|p'(x)|^{p(x)}}{p(x)^{p(x)}}\ \mu_{2,\beta}(dx),
\end{equation}
where
\begin{eqnarray*}\label{mu1px9}
&\mu_{1,\beta}(dx)&= \Big(\frac{a}{x}\Big)^{ p(x)-\beta-1} x^{-p(x)} \cdot \overline{g}(x) \ dx,
\\\label{mu2px9}
&\mu_{2,\beta}(dx)&=   \Big(\frac{a}{x}\Big)^{ p(x)-\beta-1} \left(2 \cdot \frac{p(x)-1}{\beta-\sigma(x)}\right)^{p(x)-1} \ dx.
\end{eqnarray*}
\end{coro}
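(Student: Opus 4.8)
The plan is to specialise the one--dimensional inequality of Theorem~\ref{theo-one-dim}, exactly as in the preceding corollary, but now to the choice
\[
u(x) = \frac{a}{x}, \qquad x \in I \subseteq \rp .
\]
Since $a>0$ and $I\subseteq\rp$ avoids the origin, $u$ is positive and smooth on every compact subinterval of $I$, so $u\in W^{1,p(x)}_{loc}(I)\cap W^{2,1}_{loc}(I)$; moreover $u'$ never vanishes and $|u'|^{p(x)-2}u'$ is smooth, hence lies in $W^{1,1}_{loc}(I)$. Thus all the standing regularity hypotheses of Theorem~\ref{theo-one-dim} on $u$ are automatic, and the work reduces to translating the abstract condition~\eqref{g-one-dim} and the abstract measures into the explicit forms claimed. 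I stress that this is genuinely a new instance rather than a case of Corollary~\ref{coropx45}: the constant $a$ survives inside the logarithm and cannot be scaled away because of the $p(x)$--dependence.

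First I would record
\[
u'(x) = -\frac{a}{x^2}, \qquad u''(x) = \frac{2a}{x^3}, \qquad |u'(x)| = \frac{a}{x^2},
\]
whence $(u')^2 = a^2 x^{-4}$, $uu' = -a^2 x^{-3}$, $uu'' = 2a^2 x^{-4}$ and $\log|u'| = \log(a/x^2)$. Substituting these into $g(x)=\sigma(x)(u')^2 - p'(x)uu'\log|u'| - (p(x)-1)uu''$ and factoring out the positive quantity $a^2 x^{-4}$ gives
\[
g(x) = \frac{a^2}{x^4}\left[\sigma(x) + p'(x)\,x\log\frac{a}{x^2} - 2p(x) + 2\right] = \frac{a^2}{x^4}\,\overline{g}(x),
\]
so that~\eqref{g-one-dim} is equivalent to the assumed nonnegativity of $\overline g$. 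This is the one place where the algebra must be handled with care, since it is precisely the logarithmic term $-p'(x)uu'\log|u'|$ that produces the factor $\log(a/x^2)$, together with the sign from $uu'<0$.

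It then remains to simplify the two abstract measures. For $\mu_{2,\beta}$ I would substitute $u^{p(x)-\beta-1} = (a/x)^{p(x)-\beta-1}$ and replace $2^{(p(x)-1)\chi_{\{p'\neq0\}}}$ by $2^{p(x)-1}$; this is a harmless overestimate (it only enlarges the right--hand side, and the second integrand vanishes wherever $p'=0$ anyway) and yields exactly the stated weight. For $\mu_{1,\beta}$ I would insert $|u'|^{p(x)-2} = (a/x^2)^{p(x)-2}$, $u^{\beta+1}=(a/x)^{\beta+1}$ and $g(x)=a^2 x^{-4}\overline g(x)$, then collect powers: the exponent of $a$ comes to $p(x)-\beta-1$ and that of $x$ to $-2p(x)+\beta+1$, which is precisely $(a/x)^{p(x)-\beta-1} x^{-p(x)}\overline g(x)$. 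With both measures matched, inequality~\eqref{hardypx9} is literally the conclusion of Theorem~\ref{theo-one-dim}. The only real obstacle is the exponent bookkeeping in $\mu_{1,\beta}$; the remainder is a direct specialisation.
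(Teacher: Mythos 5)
Your proposal is correct and takes exactly the paper's route: the paper's own proof is simply ``apply Theorem~\ref{theo-one-dim} with $u=\frac{a}{x}$; direct computations give the result,'' and your computations fill this in accurately --- the factorisation $g(x)=\frac{a^2}{x^4}\,\overline{g}(x)$, the exponent bookkeeping giving $(a/x)^{p(x)-\beta-1}x^{-p(x)}$ in $\mu_{1,\beta}$, and the harmless replacement of $2^{(p(x)-1)\chi_{\{p'\neq 0\}}}$ by $2^{p(x)-1}$ all check out. One minor imprecision: $|u'|^{p(x)-2}u'=-\left(\frac{a}{x^2}\right)^{p(x)-1}$ is not \emph{smooth}, since $p$ is only assumed to lie in $W^{1,1}_{loc}(I)$; it nevertheless belongs to $W^{1,1}_{loc}(I)$ (as the theorem requires) because $p\in\mathcal{P}(I)$, $p$ is bounded, and $x$ stays away from $0$ on compact subsets of $I$.
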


\begin{proof}
 We apply Theorem \ref{theo-one-dim} with the function $u=\frac{a}{x}$.
 Direct computations gives inequality \eqref{hardypx9} with the desired measures.
\end{proof}

  We obtain also an inequality with exponential--type weights, where we allow  $I=(0,\infty)$ as well as $I=(-\infty,\infty)$.

\begin{coro}\label{coropx3}  Suppose $I \subseteq\R$ is an open subset,  $p \in \mathcal{P}(I)$, and $a>0$ is an arbitrary number. Assume further that $\s(x)$ and $\beta>0$ satisfy condition~\eqref{sbeta} and the following condition  is satisfied \[\bar{g}(x):=\sigma(x)- p'(x) x -p(x)+1\quad a.e.\ in\ I.\]

Then for every Lipschitz function $\xi$ with compact support in $I$, we have
\begin{equation}\label{hardypx3}
\int_I\ |\xi|^{p(x)} \mu_{1,\beta}(dx)\leqslant \int_I | \xi'|^{p(x)}\mu_{2,\beta}(dx)+\int_I \left|\xi{\log \xi} \right|^{p(x)} \frac{|p'(x)|^{p(x)}}{p(x)^{p(x)}}\ \mu_{2,\beta}(dx),
\end{equation}
where
\begin{eqnarray*}\label{mu1px3}
&\mu_{1,\beta}(dx)&= \bar{g}(x) e^{x(p(x)-\beta -1)}\ dx,
\\\label{mu2px3}
&\mu_{2,\beta}(dx)&=   \left(2 \cdot \frac{p(x)-1}{\beta-\sigma(x)}\right)^{p(x)-1} e^{x(p(x)-\beta-1)} \ dx.
\end{eqnarray*}
\end{coro}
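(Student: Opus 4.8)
The plan is to derive Corollary~\ref{coropx3} as the special case of Theorem~\ref{theo-one-dim} corresponding to the explicit choice $u=e^{x}$, which is $C^{\infty}$ and strictly positive on all of $I$ (both when $I\subseteq(0,\infty)$ and when $I=\R$), so that the cut-offs $\chi_{\{u>0\}}$ and $\chi_{\{|u'|\neq0\}}$ are identically equal to $1$. First I would check that $u=e^{x}$ satisfies the regularity hypotheses of Theorem~\ref{theo-one-dim}. Membership $u\in W^{1,p(x)}_{loc}(I)\cap W^{2,1}_{loc}(I)$ is immediate, since on any compact $K\subseteq I$ the functions $e^{x}$ and $e^{xp(x)}$ are bounded (as $p$ is bounded by~\eqref{P}). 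The only point that needs care is $|u'|^{p(x)-2}u'=e^{x(p(x)-1)}\in W^{1,1}_{loc}(I)$: applying the chain rule locally (legitimate because $x\mapsto x(p(x)-1)$ is locally bounded and lies in $W^{1,1}_{loc}$ by $p\in\mathcal{P}(I)$), its weak derivative equals $e^{x(p(x)-1)}\big(p(x)-1+x\,p'(x)\big)$, which is in $L^{1}_{loc}(I)$ because $e^{x(p(x)-1)}$ is locally bounded and $p'\in L^{1}_{loc}(I)$.

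Next I would perform the substitution. From $u'=u''=e^{x}$ and $\log|u'|=x$ the quantity $g$ from~\eqref{g-one-dim} simplifies to
\[
g(x)=\sigma(x)e^{2x}-p'(x)x\,e^{2x}-(p(x)-1)e^{2x}=e^{2x}\big(\sigma(x)-p'(x)x-p(x)+1\big)=e^{2x}\,\bar{g}(x),
\]
so that the hypothesis $g\geqslant0$ of Theorem~\ref{theo-one-dim} coincides with the assumed sign condition $\bar{g}(x)\geqslant0$. Inserting these data into the measures of Theorem~\ref{theo-one-dim} gives, for the first weight,
\[
\frac{|u'|^{p(x)-2}}{u^{\beta+1}}\,g(x)\,\chi_{\{u>0\}}=\frac{e^{x(p(x)-2)}}{e^{x(\beta+1)}}\,e^{2x}\,\bar{g}(x)=\bar{g}(x)\,e^{x(p(x)-\beta-1)},
\]
which is exactly the claimed $\mu_{1,\beta}$, and for the second weight $u^{p(x)-\beta-1}=e^{x(p(x)-\beta-1)}$ together with $\chi_{\{|u'|\neq0\}}\equiv1$ reproduces the claimed $\mu_{2,\beta}$; here, as in Corollaries~\ref{coropx45} and~\ref{coropx454}, the factor $2^{(p(x)-1)\chi_{\{p'\neq0\}}}$ is replaced by the larger $2^{p(x)-1}$, which only enlarges the right-hand side and hence preserves the inequality~\eqref{hardypx3}.

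I do not expect a genuine obstacle here: the statement is a computation specializing Theorem~\ref{theo-one-dim}, and the only delicate step is the verification that $|u'|^{p(x)-2}u'\in W^{1,1}_{loc}(I)$, where the term $x\,p'(x)$ in the weak derivative must be controlled using solely the local integrability of $p'$ and the boundedness of $p$ from~\eqref{P}. I would also note that the parameter $a>0$ present in the hypotheses does not enter the function $u=e^{x}$ nor any of the resulting weights, so it is immaterial and the inequality holds for every $a>0$.
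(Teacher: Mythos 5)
Your proposal is correct and follows essentially the same route as the paper: apply Theorem~\ref{theo-one-dim} with $u=e^{x}$, compute $g(x)=e^{2x}\bar{g}(x)$, and read off the measures (the paper's proof is exactly this, stated tersely as ``direct computations''). Your additional verification of the regularity hypothesis $|u'|^{p(x)-2}u'\in W^{1,1}_{loc}(I)$, the remark that replacing $2^{(p(x)-1)\chi_{\{p'\neq 0\}}}$ by $2^{p(x)-1}$ only enlarges the right-hand side, and the observation that the parameter $a$ is vestigial are all sound supplements to the paper's argument.
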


\begin{proof} We apply Theorem \ref{theo-one-dim} with the function $u=e^{x}$.
We note that according to \eqref{g-one-dim} we have
\[g(x)=e^{2x}\left(\s(x)- p'(x) x-p(x)+1\right),\]
which is nonnegative by the assumption.
 Direct computations gives inequality \eqref{hardypx3} with the desired measures.
\end{proof}
 \begin{rem}
We give examples of triplets of $p(x)$, the interval $I$, and $\s(x)$ admissible in Corollary~\ref{coropx3}.
\begin{itemize}
\item For arbitrary $d>0$, we may take  $p(x)= 1+\frac{d}{|x|+1}$, any interval ${I} \subseteq\mathbb{R}$, and any function $\s(x)$, which is nonnegative and continuous on $\overline{I}$.
\item We may take  $p(x)= e^{ x}$, any finite interval $I\subseteq\R_+$, and any function $\s(x)$ continuous on $\overline{I}$ such that \[\s(x)\geqslant  (x  +1)e^{ x}-1.\]
\item We may take $p(x)=2-e^{-x^2}$, $I=(0,\infty)$, and $\sigma(x)\geqslant e^{-x^2} (2x^2 -1)+1$\\ (e.g.~$\s(x)\equiv 2e^{-3/2}+1$).
\end{itemize}
\end{rem}

\section{Links with the existing results}\label{SecLink}

In this section we present several applications of Theorem~\ref{theoplapx}.
We start with re--obtaining the main result of Skrzypczak~\cite{plap}, which deals with constant function $p$ and  implies classical Hardy inequality with optimal constant (see \cite{plap}, Theorem~5.1). Then we concentrate on the comparison with the results of Harjulehto--H\"{a}st\"{o}--Koskenoja~\cite{HaHaKo} and Mashiyev--\c{C}eki\c{c}--Mamedov--Ogras~\cite{MCMO}. We mention also the related papers considering inequalities involving Hardy operator.

In our paper~\cite{barskrzy2} we focus on $n$--dimensional inequalities, in particular with radial weights.

\subsubsection*{ Results of Skrzypczak \cite{plap,bcp-plap}}

When we
consider $1<p(x)\equiv p<\infty$ in Theorem~\ref{theoplapx}, we retrieve the main result of  \cite{plap}, implying the classical Hardy inequality with the optimal constant (see \cite{plap} for the  details and the numerous other examples). Moreover, the following theorem leads to Hardy--Poincar\'{e} inequalities with the weights of a~type $\left(1+|x|^\frac{p}{p-1}\right)^\al$, where the constants are proven to be optimal  for sufficiently big parameter $\al>0$ (see \cite{bcp-plap} for the details).
\begin{coro}[{\cite[Theorem 4.1]{plap}}]
\label{theoplap}
Assume that $1<p <\infty$ and $u
\in W^{1,p}_{loc}(\Om)$   is a nonnegative solution to the PDI \  \(-\Delta_p u \geqslant \Phi\),  in the sense of~Definition  \ref{defnier}, where function $\Phi$ is locally integrable and satisfies the condition  \begin{equation}\label{s0}
\mathbf{(\Phi,p)}\quad\quad\s_0:=\inf \left\{\s\in\R: {\Phi\cdot u}+{\s |\nabla u|^p}\geqslant 0 \quad a.e.\ in\ \Omega\cap \{u>0\}\ \right\}\in\R.
\end{equation} Assume further that $\be$ and $\s$ are arbitrary numbers such that $\be>0$ and $\be>\s\geqslant\s_0$.
Then, for every Lipschitz function $\xi$ with compact support in $\Om$, we have
\[
\int_\Om \ |\xi|^p \mu_{1,\beta}(dx)\leqslant \int_\Om |\nabla \xi|^p\mu_{2,\beta}(dx),
\]
where
\begin{eqnarray*}\label{mu1p}
&\mu_{1,\beta}(dx)&= \left(\frac{\be-\s}{p-1}\right)^{p-1}\big(\Phi\cdot u+\sigma |\nabla u|^p\big)\cdot u^{-\be-1}\chi_{\{u>0\}}\ dx,
\\\label{mu2p}
&\mu_{2,\beta}(dx)&=  u^{p-\be-1}\chi_{\{|\nabla u|\neq 0\}}\ dx.
\end{eqnarray*}
\end{coro}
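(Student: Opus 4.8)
The plan is to derive Corollary~\ref{theoplap} as the specialization of Theorem~\ref{theoplapx} to the constant exponent case $p(x)\equiv p$, reconciling the two formulations of the sign condition. First I would observe that when $p$ is constant we have $\nabla p\equiv 0$, so $|\nabla p|^{p(x)}\equiv 0$ and the set $P=\{|\nabla p|\neq 0\}$ is empty; hence the logarithmic term on the right-hand side of~\eqref{hardypx} vanishes identically and the factor $2^{(p(x)-1)\chi_{\{|\nabla p|\neq 0\}}}$ in $\mu_{2,\beta}$ collapses to $2^0=1$. This already reduces~\eqref{hardypx} to the plain inequality $\int_\Om|\xi|^p\mu_{1,\beta}(dx)\leqslant\int_\Om|\nabla\xi|^p\mu_{2,\beta}(dx)$, with measures given by the $p(x)\equiv p$ specializations of~\eqref{mu1px} and~\eqref{mu2px}.

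Next I would address the discrepancy in the normalization of the two measures: Theorem~\ref{theoplapx} carries the constant $\big(\tfrac{p-1}{\beta-\s}\big)^{p-1}$ inside $\mu_{2,\beta}$, whereas Corollary~\ref{theoplap} places $\big(\tfrac{\beta-\s}{p-1}\big)^{p-1}$ inside $\mu_{1,\beta}$ instead. Since $p$, $\beta$, and $\s$ are all constants here, this is merely a matter of multiplying both sides of the inequality through by the positive constant $\big(\tfrac{\beta-\s}{p-1}\big)^{p-1}$, which transfers the factor from the gradient measure to the measure on the left. After this rescaling the measures match those displayed in Corollary~\ref{theoplap} exactly: $\mu_{2,\beta}(dx)=u^{p-\be-1}\chi_{\{|\nabla u|\neq0\}}\,dx$ and $\mu_{1,\beta}$ acquires the prefactor $\big(\tfrac{\beta-\s}{p-1}\big)^{p-1}$.

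The remaining point is the compatibility of the hypotheses. Condition~\eqref{s0} asserts that $\s_0$, the infimum of those $\s$ for which $\Phi\cdot u+\s|\nabla u|^p\geqslant0$ holds a.e.\ on $\Om\cap\{u>0\}$, is finite, and then one takes any constant $\s$ with $\be>\s\geqslant\s_0$. I would verify that such a choice of the constant function $\s(x)\equiv\s$ satisfies the crucial conditions~\eqref{sx} and~\eqref{sbeta}: inequality~\eqref{sx} is exactly $\Phi\cdot u+\s|\nabla u|^p\geqslant0$, which holds for every $\s\geqslant\s_0$ by the definition of the infimum (a brief monotonicity argument in $\s$, since $|\nabla u|^p\geqslant0$), while~\eqref{sbeta} reads $\be>\sup_{\overline\Om}\s(x)=\s$, which is precisely the assumed $\be>\s$. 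Once both hypotheses are checked, Theorem~\ref{theoplapx} applies directly and yields the claim.

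I do not anticipate a serious obstacle, as this is essentially a bookkeeping reduction; the only subtlety worth stating carefully is the equivalence between the pointwise sign condition~\eqref{sx} for a fixed constant $\s$ and the infimum formulation~\eqref{s0}, together with the observation that~\eqref{s0} restricts the inequality to $\Om\cap\{u>0\}$ whereas~\eqref{sx} is stated on all of $\Om$ --- these agree because on $\{u=0\}$ the integrand defining $\mu_{1,\beta}$ is suppressed by $\chi_{\{u>0\}}$, so the behaviour there is irrelevant.
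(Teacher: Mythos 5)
Your proposal is correct and follows essentially the same route as the paper, which obtains Corollary~\ref{theoplap} precisely by specializing Theorem~\ref{theoplapx} to constant $p$ (so that $\nabla p\equiv 0$ kills the logarithmic term and, by the characteristic function in Lemma~\ref{inequality2} --- see Remark~\ref{remeqelem} --- the factor $2^{p-1}$ as well) and then transferring the constant $\left(\frac{\be-\s}{p-1}\right)^{p-1}$ to the left-hand measure. One small tightening: the verification of~\eqref{sx} on all of $\Om$ (rather than only on $\Om\cap\{u>0\}$ as in~\eqref{s0}) should not be argued by ``irrelevance'' of $\{u=0\}$ to the conclusion, but by Lemma~\ref{crit}, which gives $\nabla u=0$ a.e.\ on $\{u=0\}$, so the condition holds there trivially and the two formulations coincide up to a null set.
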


\begin{rem} \rm The paper~\cite{akiraj} applies the results of~\cite{plap} in order to obtain Poincar\'{e} inequalities with the best constants~\cite[Remark~7.6]{akiraj}. It is also proven therein that Hardy inequalities obtained in~\cite{plap} lead to the solvability of certain family of degenerated PDEs like ${\rm div}\left(\rho(x)|\nabla u(x)|^{p-2}\nabla u(x)\right)=x^*$, where $x^*$ is a functional on the weighted Sobolev space $W^{1,p}_\rho(\Omega)$,  involving degenerated $p$--Laplacian~\cite[Theorem 7.12]{akiraj}. Moreover, the results of~\cite{plap}  enable to formulate alternative interpretation of the  first eigenvalue of $p$--Laplacian~\cite[Remark~7.7]{akiraj}.
\end{rem}

\subsubsection*{ Results of Harjulehto--H\"{a}st\"{o}--Koskenoja \cite{HaHaKo}}

Paper \cite{HaHaKo} concerns the $n$--dimensional  norm version of Hardy--type inequality, but also the one--dimensional case is specially emphasized  therein.
 Let us mention the following result.

\begin{theo}[{\cite[Theorem 5.2]{HaHaKo}}] \label{theoHaHaKo1d}
Let $I=[0,M)$ for $M<\infty$, the variable exponent $p: I \rightarrow [1, \infty)$ be bounded, $p(0)>1$ and
\[
\limsup_{x \rightarrow 0^+} (p(x)-p(0))\log \frac{1}{x} <\infty.
\]
Moreover, suppose $\essinf_{x \in (0,x_0)} p(x)=p(0)$ for some $x_0 \in (0,1)$.

If $a \in[0, 1-\frac{1}{p(0)})$, then Hardy--type inequality
\begin{equation} \label{hhk2}
\|\xi(x) x^{a-1}\|_{L^{p(x)}(I)} \leqslant C \|\xi'(x)x^a\|_{L^{p(x)}(I)}
\end{equation}
 holds for every $\xi \in W^{1,p(x)} (I)$ with $\xi(0)=0$.
\end{theo}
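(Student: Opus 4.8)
The plan is to reduce the norm estimate, near the singular endpoint $x=0$, to the classical constant-exponent Hardy inequality with exponent $p(0)$, and to absorb the variation of $p$ through the given log-Hölder-type hypothesis. Since $I$ is a bounded interval and $p\geqslant1$ is bounded, $L^{p(x)}(I)\hookrightarrow L^1(I)$, so every $\xi\in W^{1,p(x)}(I)$ is absolutely continuous on $I$; together with $\xi(0)=0$ this gives $\xi(x)=\int_0^x\xi'(t)\,dt$ and hence $|\xi(x)|\leqslant\int_0^x|\xi'(t)|\,dt$. Setting $g(t)=\xi'(t)\,t^{a}$, the asserted inequality is equivalent to the boundedness on $L^{p(x)}(I)$ of the weighted Hardy operator
\[
(Sg)(x)=x^{a-1}\int_0^x g(t)\,t^{-a}\,dt,
\]
because its right-hand side is precisely $C\,\|g\|_{L^{p(x)}(I)}$.

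Next I would split $I=[0,x_0)\cup[x_0,M)$. On $[x_0,M)$ the weights $x^{a-1}$ and $t^{-a}$ are bounded above and below and the exponent stays in a compact subinterval of $(1,\infty)$, so the inner integral is handled by Hölder's inequality on a set of finite measure and this part contributes only a constant multiple of $\|g\|_{L^{p(x)}(I)}$. All the difficulty lies on $[0,x_0)$, where $\essinf p=p(0)$ forces $p(x)\geqslant p(0)$ and the weight is genuinely singular. There I would freeze the exponent at the constant value $p(0)$: Hölder's inequality with exponent $p(0)$ applied to the inner integral yields
\[
\int_0^x g(t)\,t^{-a}\,dt\leqslant\Big(\int_0^x g(t)^{p(0)}\,dt\Big)^{1/p(0)}\Big(\int_0^x t^{-a\,p(0)'}\,dt\Big)^{1/p(0)'},
\]
whose second factor is finite exactly because $a<1-1/p(0)$, i.e. $a\,p(0)'<1$. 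Evaluating it produces the pointwise bound $(Sg)(x)\leqslant C\,\big(\tfrac1x\int_0^x g(t)^{p(0)}\,dt\big)^{1/p(0)}$, i.e. control of $Sg$ by the constant-exponent Hardy averaging operator applied to $g^{p(0)}$.

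It then remains to pass from this pointwise bound to the $L^{p(x)}$ norm, which I would do by normalising $\|g\|_{L^{p(x)}(I)}=1$ (so that $\int_I g^{p(x)}\,dx\leqslant1$) and estimating the modular $\int_{[0,x_0)}(Sg)^{p(x)}\,dx$. Raising the averaging bound to the power $p(x)$ and using $p(x)\geqslant p(0)$ reduces the matter to a Hardy-averaging estimate for the rescaled exponent $p(\cdot)/p(0)$; this is exactly the point at which the hypothesis $\limsup_{x\to0^+}(p(x)-p(0))\log\frac1x<\infty$ becomes indispensable, since it permits replacing $x^{p(x)}$ by $x^{p(0)}$ up to a uniformly bounded factor and thereby transferring the classical estimate across the variation of the exponent. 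Converting the resulting modular bound back to a norm bound through the unit-ball property of $L^{p(x)}(I)$ finishes the proof.

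I expect the main obstacle to be precisely this last transfer. The rescaled exponent $p(\cdot)/p(0)$ equals $1$ at the origin, so the averaging operator sits at its $L^1$ endpoint and no off-the-shelf maximal-operator theorem applies directly; the log-Hölder control of $p$ near $0$, in tandem with $\essinf p=p(0)$, must be exploited by hand to compensate the logarithmic factors $\log\frac1x$ and close the estimate.
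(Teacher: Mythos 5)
First, a point of order: this paper contains no proof of Theorem~\ref{theoHaHaKo1d} at all. It is quoted verbatim from Harjulehto--H\"{a}st\"{o}--Koskenoja \cite{HaHaKo} in Section~\ref{SecLink} purely for comparison with Corollary~\ref{coropx1ha2}, which the authors prove by an entirely different mechanism (substituting $u=\frac{1}{a}x^a$ into their Theorem~\ref{theoplapx}). Your proposal must therefore stand on its own merits, and as written it does not: there is a genuine gap, sitting exactly where you flagged it, but it is more serious than your closing paragraph suggests.

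The problem is that your frozen-exponent H\"older step, which puts the \emph{entire} weight $t^{-a}$ into the conjugate factor, is already too lossy in the constant-exponent case. Consider $p\equiv p(0)$ on $(0,x_0)$: every hypothesis of the theorem holds, with the $\limsup$ condition satisfied vacuously, and your rescaled exponent is $q\equiv 1$. Your pointwise bound
\[
(Sg)(x)\leqslant C\Bigl(\frac{1}{x}\int_0^x g(t)^{p(0)}\,dt\Bigr)^{1/p(0)}
\]
then reduces your programme to the claim that $\int_0^{x_0}f\,dt\leqslant 1$ implies $\int_0^{x_0}\frac1x\int_0^x f\,dt\,dx\leqslant C$, i.e.\ modular boundedness of the averaging operator at its $L^1$ endpoint. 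This is false: by Fubini,
\[
\int_0^{x_0}\frac1x\int_0^x f(t)\,dt\,dx=\int_0^{x_0}f(t)\log\frac{x_0}{t}\,dt,
\]
and for $f(t)=t^{-1}\log^{-2}(1/t)$ one has $\int_0^{x_0}f\,dt<\infty$ while $\int_0^{x_0}f(t)\log\frac{x_0}{t}\,dt=\infty$. Since the breakdown occurs when $p$ does not vary at all, the log-H\"older hypothesis cannot ``compensate the logarithmic factors'' as you hope: the logarithm was lost in the H\"older step itself, before any variable-exponent issue arises, and no handling of the exponent variation can recover it.

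The repair is the classical device used in every proof of the weighted Hardy inequality: share the weight inside H\"older. Fix $s$ with $a<s<1-\frac{1}{p(0)}$ (possible precisely because $a<1-\frac{1}{p(0)}$) and write $g(t)t^{-a}=\bigl(g(t)t^{s-a}\bigr)\,t^{-s}$; H\"older with exponent $p(0)$ now gives
\[
(Sg)(x)^{p(0)}\leqslant C\,x^{(a-s)p(0)-1}\int_0^x g(t)^{p(0)}t^{(s-a)p(0)}\,dt,
\]
and Fubini produces the convergent factor $\int_t^{x_0}x^{(a-s)p(0)-1}\,dx\leqslant C\,t^{(a-s)p(0)}$, which exactly cancels the power attached to $g^{p(0)}$ and yields the constant-exponent estimate with no logarithmic loss. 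Only after this correction can the variable exponent be brought in, and there your remaining ingredients --- normalising the modular, $p(x)\geqslant p(0)$ on $(0,x_0)$, and $x^{-(p(x)-p(0))}\leqslant e^{C}$ near $0$ from the $\limsup$ hypothesis --- are indeed the natural tools. Your initial reduction to the operator $S$, the splitting at $x_0$, and the treatment of $[x_0,M)$ are all sound.
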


 We have the following related result.
\begin{coro}\label{coropx1ha2} Suppose $I\subseteq\R_+$ is an open subset and $a,\beta>0$ are arbitrary numbers. Let $p \in \mathcal{P}(I)$ and assume  that there exists a~continuous function $A(x)$ such that \begin{equation}
\label{A} a\beta+(1-a) x p'(x) \log x+(a-3)(p(x)-1)\geqslant A(x)\geqslant 0.
\end{equation}
Then, for every Lipschitz function $\xi$ with compact support in $I$,  we have
\begin{equation}\label{hardypx1ha}
\int_{I}\ |x^{a-1}\xi|^{p(x)} \mu_{1,\beta}(dx)\leqslant \int_{I}|x^a \xi'|^{p(x)}\mu_{2,\beta}(dx)+\int_{I}\left( x^a |\xi {\log \xi|}  \frac{|p'(x)| }{p(x)}\right)^{p(x)}\; \mu_{2,\beta}(dx),
\end{equation}
where
\begin{eqnarray*}
&\mu_{1,\beta}(dx)&= {x^{-a(\beta+1)}}  A(x)\, dx,
\\
&\mu_{2,\beta}(dx)&=   {x^{-a(\beta+1)}} \, dx.
\end{eqnarray*}
\end{coro}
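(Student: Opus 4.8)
The plan is to apply Theorem~\ref{theo-one-dim} with the specific power-type solution $u=x^{-a}$ on $I\subseteq\R_+$, and then to match the resulting weights and condition~\eqref{g-one-dim} against the statement of the corollary. The natural choice $u=x^{-a}$ is suggested by the weights: we want $u^{p(x)-\be-1}$ to produce $x^{-a(\be+1)}$ as the leading factor, and indeed $u^{-\be-1}=x^{a(\be+1)}$ while $u^{p(x)-\be-1}=x^{-a(p(x)-\be-1)}$, so the substitution $\xi\mapsto x^{a-1}\xi$ on the left and $\xi\mapsto x^a\xi'$ on the right is exactly what converts the generic $p(x)$--Hardy inequality into the form~\eqref{hardypx1ha}.

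\medskip

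\noindent First I would compute $u'=-a x^{-a-1}$ and $u''=a(a+1)x^{-a-2}$ and substitute them into the definition of $g(x)$ from~\eqref{g-one-dim},
\[
g(x)=\sigma(x)(u')^2-p'(x)uu'\log|u'|-(p(x)-1)uu''.
\]
Each term carries a common power of $x$: $(u')^2=a^2x^{-2a-2}$, $uu'=-a x^{-2a-1}$, and $uu''=a(a+1)x^{-2a-2}$. Factoring out $x^{-2a-2}$ (and absorbing the extra $x$ in the middle term, noting $uu'=-ax^{-2a-1}=-a x\cdot x^{-2a-2}$) yields
\[
g(x)=x^{-2a-2}\Bigl[\sigma(x)a^2+p'(x)\,a\,x\,\log|{-a}x^{-a-1}|-(p(x)-1)a(a+1)\Bigr].
\]
The main subtlety is reconciling the bracketed quantity with the left-hand side of~\eqref{A}. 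The corollary's hypothesis is stated with an auxiliary continuous function $A(x)$, and the role of $A$ is precisely to absorb the constant multiplicative factors (powers of $a$, the sign of $u'$, and the constant inside the logarithm) so that the nonnegativity assumption~\eqref{g-one-dim} becomes $g(x)\geqslant 0$, guaranteed by $A(x)\geqslant 0$. I would verify that up to the harmless constant factors the bracket matches $a\beta+(1-a)xp'(x)\log x+(a-3)(p(x)-1)$ from~\eqref{A}; the appearance of $\log x$ (rather than $\log|ax^{-a-1}|$) and of the combination $(a-3)$ reflects splitting $\log|{-a}x^{-a-1}|=\log a-(a+1)\log x$ and reorganizing, after which $A(x)$ collects the residual terms.

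\medskip

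\noindent With~\eqref{g-one-dim} verified, Theorem~\ref{theo-one-dim} applies directly and produces inequality~\eqref{hardypx2} with $u=x^{-a}$. I would then carry out the routine bookkeeping: the factor $|u'|^{p(x)-2}=|a|^{p(x)-2}x^{(-a-1)(p(x)-2)}$ combined with $u^{-\be-1}g(x)\chi_{\{u>0\}}$ gives $\mu_{1,\beta}(dx)$, and after the $\xi\mapsto x^{a-1}\xi$, $\xi'\mapsto x^a\xi'$ reinterpretation of the test function one reads off $\mu_{1,\beta}(dx)=x^{-a(\beta+1)}A(x)\,dx$ and $\mu_{2,\beta}(dx)=x^{-a(\beta+1)}\,dx$, with the constant $\bigl(\tfrac{p(x)-1}{\be-\s(x)}\bigr)^{p(x)-1}2^{(p(x)-1)\chi_{\{p'\neq 0\}}}$ and the powers of $a$ all absorbed into $A(x)$. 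The genuinely nontrivial step is the algebraic identification in the previous paragraph; the rest is direct computation, as the corollary statement itself indicates by closing with ``direct computations finish the proof.''
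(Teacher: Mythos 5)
Your substitution $u=x^{-a}$ is the wrong solution, and the argument does not survive it. The paper's proof applies Theorem~\ref{theoplapx} with $u=\tfrac{1}{a}x^{a}$ together with the specific choice $\sigma(x)=\beta-\tfrac{2}{a}(p(x)-1)$. To see that $u=x^{-a}$ cannot work, match exponents on the right-hand side: the corollary's right-hand integrand is $|x^{a}\xi'|^{p(x)}x^{-a(\beta+1)}$, so the weight multiplying $|\xi'|^{p(x)}$ must be $x^{ap(x)-a(\beta+1)}$, whereas Theorem~\ref{theo-one-dim} produces (up to bounded factors) the weight $u^{p(x)-\beta-1}$, which for $u=x^{-a}$ equals $x^{-ap(x)+a(\beta+1)}$ --- off by the non-constant factor $x^{2a(p(x)-\beta-1)}$, which cannot be absorbed into any constant or into $A(x)$. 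Your heuristic only tracked the $-\beta-1$ part of the exponent and ignored the $p(x)$ part; with $u=\tfrac{1}{a}x^{a}$ one gets $u^{p(x)-\beta-1}=a^{\beta+1-p(x)}x^{ap(x)-a(\beta+1)}$, which is exactly what is needed. The same reversal ruins the hypothesis check: for $u=x^{-a}$, condition~\eqref{g-one-dim}, after dividing by $a x^{-2a-2}>0$, reads $\sigma(x)a+xp'(x)\log a-(a+1)xp'(x)\log x-(a+1)(p(x)-1)\geqslant 0$, whose coefficient $-(a+1)$ on $xp'(x)\log x$ can never be reconciled with the coefficient $(1-a)$ appearing in hypothesis~\eqref{A} (they agree only if $a^{2}=-1$); since $xp'(x)\log x$ is an $x$-dependent quantity of arbitrary sign, \eqref{A} simply does not imply $g\geqslant 0$ for your $u$, so Theorem~\ref{theo-one-dim} is not applicable.

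The device of ``absorbing the constants into $A(x)$'' is also structurally invalid for right-hand-side factors. The function $A(x)$ occurs only in $\mu_{1,\beta}$; to pass from the theorem's inequality to the corollary's one needs the theorem's right-hand weight to be \emph{bounded above} by the corollary's, so the factor $\bigl(\tfrac{p(x)-1}{\beta-\sigma(x)}\bigr)^{p(x)-1}2^{(p(x)-1)\chi_{\{p'\neq 0\}}}$ cannot be shoved into the left-hand hypothesis; it has to be neutralized on the right. That is precisely what the (in your proposal unspecified) choice of $\sigma$ accomplishes in the paper: with $\sigma(x)=\beta-\tfrac{2}{a}(p(x)-1)$, which satisfies~\eqref{sbeta} since $\sigma<\beta$, this factor is at most $\bigl(\tfrac{a}{2}\bigr)^{p(x)-1}2^{p(x)-1}=a^{p(x)-1}$, and it cancels against the $a^{\beta+1-p(x)}$ coming from $u^{p(x)-\beta-1}$ to leave the uniform constant $a^{\beta}$; the proof then concludes by dividing both sides by $a^{\beta}$ and invoking~\eqref{A} to bound the left-hand density from below by $x^{-a(\beta+1)}A(x)$. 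Without choosing $u=\tfrac{1}{a}x^{a}$ and this particular $\sigma$, neither the exact measure $\mu_{2,\beta}(dx)=x^{-a(\beta+1)}dx$ nor the nonnegativity hypothesis can be recovered, so the proposal has a genuine gap at its central step.
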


\begin{proof}
We apply $u=\frac{1}{a} x^a$ in Theorem~\ref{theoplapx} and we obtain inequality~\eqref{hardypx} with measures $\widetilde{\mu}_1,\,\widetilde{\mu}_2$. We simplify the right--hand side measure $\widetilde{\mu}_2$ by taking $\sigma(x)=\beta-\frac{2}{a}(p(x)-1)$ which satisfies crucial conditions.

We ensure the condition \eqref{sx} by \eqref{A}. Indeed, we estimate the expression in the left--hand side measure $\widetilde{\mu}_1$ from below as follows
\[a\beta -2(p(x)-1)+  (1-a)\left[  x \log x  p'(x)-(p(x)-1)\right]=\]
\[=a\beta+(1-a) x \log x p'(x)+(a-3)(p(x)-1)\geqslant A(x)\geqslant 0.\]

We reach the goal by dividing both sides by $a^\beta$.
\end{proof} 

\begin{rem}[Comparison of Theorem~\ref{theoHaHaKo1d} and Corollary~\ref{coropx1ha2}]\label{remHasto}  \rm
Inequalities~\eqref{hhk2} and our~\eqref{hardypx1ha} are similar, however there are some differences. Inequality \eqref{hardypx1ha} is a modular version, while \eqref{hhk2} is a norm one and it involves  the additional term as well as the weights $\mu_{1,\beta}$ and $\mu_{2,\beta}$ of power type with strictly negative exponents. The requirements on $p(x)$ are of different types.
Furthermore, we formulate  inequality \eqref{hardypx1ha} for every Lipschitz and compactly supported function $\xi$ in $I$, while~\eqref{hhk2} is stated for  $\xi \in W^{1,p(x)} (I)$ with $\xi(0)=0$.
Moreover, we allow infinite interval $I$ and a bit different range of parameter~$a$.
\end{rem}

\begin{rem}\rm The following functions $p(x)$ are admissible both in Theorem~\ref{theoHaHaKo1d} and in Corollary~\ref{coropx1ha2}.
We note that in Theorem~\ref{theoHaHaKo1d} we need to restrict our consideration to the interval $I=(0,M)$, $M<\infty$. To compare with Corollary~\ref{coropx1ha2}, in the two last examples we
allow the infinite interval~$I$.
\begin{itemize}
\item If $\gamma > 1$
,
we take $p(x)=x+\gamma$.
\item If  $\gamma \geqslant 1$
,
we take $p(x)=2-\frac{1}{x+\gamma}$.
\item If $\gamma>0$ and $d_1>d_2>0$
,
we take $p(x)=1+\frac{\gamma+d_1 x}{\gamma+d_2 x}$.

\end{itemize}
In every example of the above ones, in our~\eqref{hardypx1ha} in $\mu_{1,\beta}$ we may choose $A(x)$ separated from zero.
\end{rem}

The result of \cite{HaHaKo} was further developed in variable exponent Orlicz--Sobolev setting~\cite{miz2}.

\subsubsection*{ Results of Mashiyev--\c{C}eki\c{c}--Mamedov--Ogras \cite{MCMO}}
In \cite{MCMO} the authors prove the following extension of Hardy--type inequality from \cite{HaHaKo} by Harjulehto--H\"{a}st\"{o}--Koskenoja described above.

\begin{theo}[{\cite[Theorem~3]{MCMO}}]\label{theomcmo} Suppose $p(x),$ $q(x)$ and $\alpha(x)$  are log--H\"older continuous at the origin and at the infinity, i.e. there exist constants $C_i,$ $i=1,2,$ such that the following conditions hold
\[
|p(x)-p(0)| \log \frac{1}{x} \leqslant C_1, \qquad {\rm where }\;  x\in \left(0, {1}/{2}\right]
\]
and
\[
|p(x)-\lim_{|x|\to\infty}p(x)|\log(e+x) \leqslant C_2, \qquad {\rm where } \; x\in(0,\infty),
\]
with $1<p^- \leqslant p(x) \leqslant q(x) \leqslant q^+<\infty$ and $-\infty <\alpha^-\leqslant \alpha(x)<\infty$ for $x \in (0,\infty)$.
Then there exists a constant $C>0$ such that for every  function~$\xi$, absolutely continuous on $[0,\infty)$, with $\xi(0)=0$ we have
\begin{equation}\label{mcmo}
\|\xi(x)x^{\alpha(x)-\frac{1}{p'(x)}-\frac{1}{q(x)}}\|_{L^{q(x)}(0,\infty)} \leqslant C\|\xi'(x)x^{\alpha(x)}\|_{L^{p(x)}(0,\infty)}.
\end{equation} \end{theo}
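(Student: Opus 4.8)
The plan is to reduce the stated inequality to a boundedness property of a weighted Hardy operator between two variable exponent Lebesgue spaces, and then to exploit the log--H\"older continuity of $p$, $q$, $\alpha$ at the origin and at infinity in order to freeze the exponents and invoke a classical two--weight Hardy criterion.

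First, since $\xi$ is absolutely continuous on $[0,\infty)$ with $\xi(0)=0$, I would use the representation $\xi(x)=\int_0^x \xi'(t)\,dt$. Setting $g(t)=\xi'(t)\,t^{\alpha(t)}$, so that the right--hand side of \eqref{mcmo} equals $\|g\|_{L^{p(x)}(0,\infty)}$, the claim becomes the estimate
\[
\Big\| x^{\alpha(x)-\frac{1}{p'(x)}-\frac{1}{q(x)}}\int_0^x g(t)\,t^{-\alpha(t)}\,dt\Big\|_{L^{q(x)}(0,\infty)}\le C\,\|g\|_{L^{p(x)}(0,\infty)}.
\]
Thus the whole statement is equivalent to the boundedness $L^{p(x)}(0,\infty)\to L^{q(x)}(0,\infty)$ of the weighted Hardy operator $Hg(x)=x^{\alpha(x)-1/p'(x)-1/q(x)}\int_0^x g(t)\,t^{-\alpha(t)}\,dt$.

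Next, I would localize. Splitting $(0,\infty)=(0,1]\cup[1,\infty)$, I treat the two pieces separately. On $(0,1]$ the hypothesis $|p(x)-p(0)|\log(1/x)\le C_1$, together with the analogous bounds for $q$ and $\alpha$, makes $x^{\,p(x)-p(0)}$ and $x^{\,q(x)-q(0)}$ bounded above and below, so every power weight may be replaced by the corresponding power with the frozen exponents $p(0)$, $q(0)$, $\alpha(0)$ at the cost of multiplicative constants; symmetrically, the log--H\"older condition at infinity $|p(x)-p_\infty|\log(e+x)\le C_2$ lets me freeze the exponents on $[1,\infty)$. On each region the modular and the norm are then comparable to the constant--exponent ones, reducing the problem to a classical weighted Hardy inequality with constant exponents $p\le q$ (note $p(x)\le q(x)$ pointwise). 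Here I would invoke the classical Bradley--Kokilashvili--Maz'ya criterion: for $p\le q$ such an inequality holds if and only if the balance $\sup_{r>0}\big(\int_r^\infty w\big)^{1/q}\big(\int_0^r v^{1-p'}\big)^{1/p'}<\infty$ is satisfied, where $w$, $v$ are the left-- and right--hand weights. The exponent $\alpha(x)-1/p'(x)-1/q(x)$ in the target weight is precisely the critical homogeneity making this supremum finite for the power weights produced after freezing: the $-1/p'-1/q$ shift is exactly the scaling correction between the $q$--norm on the left and the $p$--norm on the right. Checking this Muckenhoupt--type balance for the frozen weights on each region, and then summing the two regional bounds while controlling the interaction across $x=1$ by the quasi--triangle inequality for the $L^{q(x)}$--norm, yields the global estimate with a single constant $C$.

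The main obstacle I expect is the genuinely off--diagonal, two--exponent nature of the estimate: because $p(x)$ and $q(x)$ differ, one cannot simply appeal to boundedness of the maximal operator on a single space, and the passage from the modular control that log--H\"older continuity supplies to the homogeneous norm inequality must be carried out uniformly both near the origin and near infinity. A secondary technical point is verifying integrability of the dual weight $t^{-\alpha(t)p'(t)}$ near $0$ when estimating the inner integral, which is exactly where the precise form of the weight exponent enters.
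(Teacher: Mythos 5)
First, a point of context: the paper you are working from contains \emph{no proof} of this statement. Theorem~\ref{theomcmo} is quoted verbatim from \cite{MCMO} in the section ``Links with the existing results'', purely for comparison with Corollary~\ref{coropx1ha2}; so your attempt can only be measured against the original proof in \cite{MCMO} (which, like \cite{DiSa}, proceeds by modular estimates and the variable--exponent H\"older inequality). Your skeleton --- writing $\xi(x)=\int_0^x\xi'(t)\,dt$, recasting \eqref{mcmo} as $L^{p(\cdot)}\to L^{q(\cdot)}$ boundedness of a weighted Hardy operator, splitting $(0,\infty)$ near the origin and near infinity, and using a constant--exponent two--weight criterion as the backbone --- has the right shape, and your check that the shift $-1/p'-1/q$ is exactly what balances the Bradley--Maz'ya condition for the frozen power weights is correct.

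The fatal step is the claim that, thanks to log--H\"older continuity, ``on each region the modular and the norm are then comparable to the constant--exponent ones''. This is false, and it is the whole difficulty of the subject. The hypothesis $|p(x)-p(0)|\log(1/x)\leqslant C_1$ controls the oscillation of $p$ only with the vanishing weight $1/\log(1/x)$: at a fixed $x_1\in(0,1]$ the value $p(x_1)$ may differ from $p(0)$ by a fixed positive amount, so around such a point $L^{p(\cdot)}$ and $L^{p(0)}$ are genuinely different spaces. Worse, no matter how small the (nonzero) oscillation is, there is no uniform two--sided norm equivalence: if $p\equiv p_1\neq p_0$ on a small interval $(x_1,x_1+h)$, then for $f=\lambda\chi_{(x_1,x_1+h)}$ one has $\|f\|_{L^{p(\cdot)}}/\|f\|_{L^{p_0}}\approx h^{1/p_1-1/p_0}$, which blows up in one direction or the other as $h\to0$. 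Hence the two--exponent inequality cannot be reduced to a constant--exponent one by ``equivalence of spaces''; if it could, most of the variable--exponent Hardy literature would be superfluous. What log--H\"older continuity actually buys is pointwise comparability of \emph{power--type expressions}, $x^{\gamma(x)}\approx x^{\gamma(0)}$ near $0$ (because $|\gamma(x)-\gamma(0)|\,|\log x|\leqslant C$): it lets you freeze exponents inside the weights, never in the Lebesgue exponent of the space. Accordingly, the genuine proof normalizes $\|\xi'x^{\alpha(\cdot)}\|_{L^{p(\cdot)}}\leqslant 1$, bounds $|\xi(x)|\leqslant 2\,\|\xi'\,t^{\alpha(t)}\|_{L^{p(\cdot)}(0,x)}\,\|t^{-\alpha(t)}\|_{L^{p'(\cdot)}(0,x)}$ by the variable--exponent H\"older inequality, evaluates the norm of the power function $t^{-\alpha(t)}$ by freezing (the legitimate use), and only then sums modular estimates over the regions, exploiting $p(x)\leqslant q(x)$. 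Finally, your ``secondary technical point'' --- integrability of $t^{-\alpha(t)p'(t)}$ near $0$ --- is not secondary: already for constant exponents the inequality fails when $\alpha p'\geqslant 1$, so a hypothesis of the form $\alpha(x)<1/p'(x)$ (not made explicit in the quotation above, but required for the result to hold at all) is needed, and it is precisely what both your Bradley check and the H\"older step consume.
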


 In \cite{HaHaKo} the authors prove \eqref{mcmo} with constant $\alpha$, $q(x)=p(x)>1$, on a finite interval $I$, and without the assumption $p(0) \leqslant p(x)$ for small $x$'s.

\medskip

We have the following related result.

\begin{rem}\rm When in Corollary \ref{coropx1ha2} we assume additionally that $A(x)$ has values separated from zero, we take $I=\R_+$, we put $\alpha(x)=a\left(1-\frac{\beta+1}{p(x)}\right)$, and we rearrange power--type terms, we obtain
\begin{eqnarray*}
&A_0\left(\int_{0}^\infty |x^{\alpha(x)-1}\xi|^{p(x)}\right) dx \leqslant \int_{0}^\infty |x^{\alpha(x)} \xi'|^{p(x)} + \left( x^{\alpha(x)}  |\xi{\log \xi|}  \frac{|p'(x)| }{p(x)}\right)^{p(x)}  dx&
\end{eqnarray*}
for every Lipschitz function $\xi$ with compact support in $\mathbb{R_+}$.

The comparison of Theorem~\ref{theomcmo} with our above inequality is similar as in the case of  theorem by Harjulehto--H\"{a}st\"{o}--Koskenoja \cite{HaHaKo} (see Remark~\ref{remHasto}).
\end{rem}

\subsubsection*{ Results of Diening--Samko~\cite{DiSa}, Rafeiro--Samko~\cite{RaSa}, Harman~\cite{Ha} and others}
In \cite{DiSa} the derived Hardy--type inequality involves Hardy operator. For $p\in \mathcal{P}(0,\infty)$ satisfying  conditions related to log--H\"older continuity, the authors prove the following inequality
\[
\Big\| x^{\alpha(x)+\mu(x)-1} \int_0^x\frac{\xi(y)}{y^{\alpha(y)}} dy\Big\|_{L^{q(x)}(0,\infty)} \leqslant C \|\xi\|_{  L^{p(x)}(0,\infty)},
\]
where $x \in (0,\infty)$, an exponent $q \in \mathcal{P} (0,\infty)$ is any function such that $\frac{1}{q(0)}=\frac{1}{p(0)}-\mu(0)$ with $\mu(0) \in [0, \frac{1}{p(0)})$, $\frac{1}{q(\infty)}=\frac{1}{p(\infty)}-\mu(\infty)$ with $\mu(\infty) \in [0, \frac{1}{p(\infty)})$ and $\alpha(0) <\frac{1}{p'(0)}, \alpha(\infty) <\frac{1}{p'(\infty)}$. Exponents $q(x), \mu(x)$ and $\alpha(x)$ are also supposed to satisfy local log--H\"{o}lder condition in zero and  infinity.

In \cite{RaSa} by Rafeiro--Samko the derived Hardy--type inequality involves the Riesz potential. It is stated on a bounded domain $\Omega \subset \mathbb{R}^n$, which complement has the cone property. Similar Hardy--type inequality is considered in~\cite{MaHa}.

An inequality corresponding to results of \cite{RaSa}, but involving Hardy operator $Hv(x)=\int_0^x v(t) dt$, is proven in~\cite{Ha}. The authors derive the following inequality which holds for every nonnegative and locally integrable function~$\xi$
\begin{equation} \label{Ha}
\| H\xi|x|^{\alpha(x)-1}\|_{L^{p(x)}(0,l)} \leqslant \|\xi|x|^{\alpha(x)}\|_{L^{p(x)}(0,l)},
\end{equation}
where $l>0$, functions $\alpha,p : (0,l) \rightarrow \mathbb{R}$ are  measurable and such that $-\infty <\alpha^-\leqslant \alpha(x) \leqslant \alpha^+<\infty$ and $-\infty <p^-\leqslant p(x) \leqslant p^+<\infty$. Moreover, the author indicate the necessary condition for validity of Hardy inequality \eqref{Ha} (see \cite[Theorem 3, 4]{Ha}).

There are several other papers dealing with one--dimensional Hardy inequality involving Hardy operator,  e.g.~\cite{cruzuribe,harman2,HaMa,harman3,MaZe1,MaZe2}. Those papers consider the further regularity analysis of inequality similar to~\eqref{Ha}, with different kind of weights under norm. The authors indicate the different type of regularity in the neighborhood of zero and at infinity for the variable exponents and present the necessary and sufficient conditions for the validity of Hardy inequality.

\section{Open questions}\label{SecOpen}

We find it interesting to investigate the following ideas.

\subsubsection*{Erasing the additional term}
Is it possible to improve~\eqref{hardypx} to an inequality of the following form
\[
\int_\Om \left|\xi \right|^{p(x)} \mu_{1,\beta}(dx)\leqslant c_2\int_\Om |\nabla \xi|^{p(x)}\mu(dx),
\]
where $c_2>0$ and $\mu_{1,\beta}(dx)$ is given by \eqref{mu1px}, and $\mu(dx)$ is eventually worse that $\mu_{2,\beta}(dx)$ given by \eqref{mu2px}? 

\subsubsection*{Improving the right--hand exponent}
We find it deserving attention to improve an exponent on the right--hand side of \eqref{hardypx}. When is it possible to prove an inequality \begin{equation*}
\int_\Om \ |\xi|^{q(x)} \mu_{1,\beta}(dx)\leqslant \int_\Om |\nabla \xi|^{p(x)}\mu_{2,\beta}(dx)+\int_\Om \left|\xi{\log \xi} \right|^{p(x)}  \mu_{3,\beta}(dx),
\end{equation*}
with $q(x)>p(x)$?

\subsubsection*{Inequalities in more general spaces}
Investigation on Hardy inequalities in variable exponent spaces is lively studied topic. They are considered for Orlicz--Sobolev functions with $|\nabla u|\in L^{p(\cdot)}\log L^{p(\cdot)q(\cdot)}$  in the literature~\cite{miz2}. Our framework is already applied in Orlicz setting~\cite{orliczhardy} with constant type of growth. What inequalities can be obtained in variable exponent Orlicz--Sobolev via our method? 



\subsection*{Acknowledgments}

The authors would like to thank Agnieszka Ka\l{}amajska, Tomasz Adamowicz, and Lech Maligranda for  discussions and help in finding appropriate literature.


\begin{thebibliography}{99}

\bibitem{AdamHa} T. Adamowicz, P. H\"{a}st\"{o}, {\em Harnack's inequality and the strong $p(\cdot)$--Laplacian}, J. Differ. Equations 250 (3) (2011), 1631--1649.

\bibitem{barbhardy04} G. Barbatis, S. Filippas, A. Tertikas, {\em A unified approach to improved $L^p$ Hardy inequalities with best constants,} Trans. Amer. Math. Soc. 356 (6) (2004), 2169--2196.

\bibitem{barnas} S. Barna\'s, {\it Existence result for hemivariational inequality involving $p(x)$--Laplacian}, Opuscula Math. 32 (2012), 439--454.


\bibitem{bogdan} K. Bogdan, B. Dyda, {\it The best constant in a frictional Hardy inequality}, Math. Nachr. 284 (5) (2011), 629--638.

\bibitem{hajpunkt}  B. Bojarski, P. Haj\l{}asz, {\em Pointwise inequalities for Sobolev functions and some applications}, Studia Math.  106 (1993), 77--92.

\bibitem{Bo} S. Boza, J. Soria, {\em Weighted Hardy modular inequalities in variable $L^p$ spaces for decreasing functions}, J. Math. Anal. Appl. 348 (2008), 383--388.

\bibitem{bhs} S. M. Buckley, R. Hurri--Syrj\"{a}nen, {\em Iterated log--scale Orlicz--Hardy inequalities}, Ann. Acad. Sci. Fenn. Math. 38 (2) (2013), 757--770.

\bibitem{buc} S. M. Buckley, P. Koskela, {\em Orlicz--Hardy inequalities,}  Illinois J. Math. 48 (3) (2004), 787--802.

\bibitem{cac} R. Caccioppoli, {\em Limitazioni integrali per le soluzioni di un'equazione lineare ellitica a derivate parziali,} Giorn. Mat. Battaglini 80 (1951), 186--212.

\bibitem{cruz} D. V. Cruz--Uribe, A. Fiorenza, {\em Variable Lebesgue Spaces. Foundations and Harmonic
Analysis}, Birkh$\ddot{\rm{a}}$user, Springer, Heidelberg, 2013. 

\bibitem{cruzuribe} D. V. Cruz--Uribe, F. I. Mamedov, {\em On a general weighted Hardy type inequality in
the variable exponent Lebesgue       spaces}, Rev. Mat. Complut. 25 (2) (2012), 335--367.

\bibitem{dambr}L. D'Ambrosio, {\em Hardy type inequalities related to degenerate elliptic differential operators}, Ann. Scuola Norm. Sup. Pisa Cl. Sci. ser. 5,  IV (2005), 451--486.

\bibitem{akiraj}  R. N. Dhara, A. Ka\l{}amajska,
{\em On equivalent conditions for the validity of Poincar\'e
inequality on weighted Sobolev space with applications to
the solvability of degenerated PDEs involving p-Laplacian,} preprint, http://www.mimuw.edu.pl/badania/preprinty/preprinty-imat/

\bibitem{ks} L. Diening, P. Harjulehto, P. H\"{a}st\"{o}, M. R{u}\v{z}i\v{c}ka, {\em Lebesgue and Sobolev Spaces with Variable Exponents}, Lecture Notes in Math. 2017, Springer--Verlag, Heidelberg, 2011.

\bibitem{DiSa} L. Diening, S. Samko, {\em Hardy inequality in variable exponent Lebesgue spaces}, Fract. Calc. Appl. Anal. 10 (1) (2007),  1--18.

\bibitem{barskrzy2} S. Dudek, I. Skrzypczak, {\em Variable exponent Hardy inequalities in $\rn$,} preprint.

\bibitem{fanzhang} X. Fan, Q. Zhang, {\it Existence of solutions for $p(x)$--Laplacian Dirichlet problem}, Nonlinear Anal. 52 (2003), 1843--1852.

\bibitem{fan} X. Fan, D. Zhao, \textit{On the generalized Orlicz--Sobolev space $W^{k,p(x)}(\Omega)$}, J. Gansu Educ. College 12 (1) (1998), 1--6.

\bibitem{orlicz} X. Fan, D. Zhao, \textit{On the spaces $L^{p(x)}(\Omega)$ and $W^{m,p(x)}(\Omega)$}, J. Math. Anal. Appl. 263 (2001), 424--446.

\bibitem{HaHaKo} P. Harjulehto, P. H\"{a}st\"{o}, M. Koskenoja, {\em Hardy's inequality in a variable exponent Sobolev spaces}, Georgian Math. J. 12 (3) (2005), 431--442.


\bibitem{overview} P. Harjulehto, P. H\"{a}st\"{o}, U. Le, M. Nuortio, {\em Overview of differential equations with non--standard growth}, Nonlinear Anal. 72 (2010), 4551--4574.

\bibitem{Ha} A. Harman, {\em On necessary condition for the variable
exponent Hardy inequality}, J. Func. Sp. Appl. (2012),
http://dx.doi.org/10.1155/2012/385925.

\bibitem{harman2} A. Harman, {\em On necessary and sufficient conditions for variable exponent Hardy
inequality}, Math. Inequal.
      Appl. 17 (1) (2014), 113--119.
      
\bibitem{HaMa} A. Harman, F. I. Mamedov, {\em On boundedness of weighted Hardy operator in
$L^p(\cdot)$ and regularity condition}, J. Inequal. Appl. (2010),
Art. ID 837951, 14 pp.

\bibitem{harman3} A. Harman, F. I. Mamedov, {\em On a Hardy type general weighted inequality in spaces
$L^{p(\cdot)}$}, Integral       Equations Operator Theory 66 (4) (2010), 565--592.


\bibitem{Hudzik} H. Hudzik, {\em On generalized Orlicz--Sobolev
space}, Funct. Approximatio Comment. Math. 4 (1976), 37--51. 

\bibitem{iwsbo} T. Iwaniec, C. Sbordone, {\em Caccioppoli estimates and very weak solutions of elliptic equations. Renato Caccioppoli and modern analysis.}  Atti Accad. Naz. Lincei Cl. Sci. Fis. Mat. Natur. Rend. Lincei (9) Mat. Appl. 14 (2003), 3 (2004), 189--205.

\bibitem{akkpp2012} A. Ka\l amajska, K. Pietruska--Pa\l uba, {\em New Orlicz variants of Hardy type inequalities with power, power--logarithmic, and power--exponential weights,} Cent. Eur. J. Math. 10 (6) (2012), 2033--2050.

\bibitem{nonex} A. Ka\l amajska, K. Pietruska--Pa\l uba, I. Skrzypczak, {\em Nonexistence results for differential inequalities involving $A$--Laplacian,} Adv. Diff. Eqs. 17 (3--4) (2012), 307--336.

\bibitem{KoRa} O. Kov\'{a}cik and J. R\'{a}kosn\'{i}k, {\em On spaces $L^{p(x)}$ and $W^{1,p(x)}$}, Czechoslovak Math. J. 41 (116) (1991), 592--618.

\bibitem{kmp} A. Kufner, L. Maligranda, L. E. Persson, {\em  The Hardy inequality. About its history and some related results}, Vydavatelsk\'{y} Servis, Plze\v{n}, 2007.



\bibitem{miz2} F.-Y. Maeda, Y. Mizuta, T. Ohno, T. Shimomura, {\em Hardy's inequality in Musielak--Orlicz--Sobolev spaces}, Hokkaido Math. J. 44 (2) (2014), 139--155.

\bibitem{MaHa} F. I. Mamedov, A. Harman, {\em
On a weighted inequality of Hardy type in spaces $L^{p(\cdot)}$},  J. Math. Anal. Appl. 353 (2) (2009), 521--530.

\bibitem{MaZe1} F. I. Mamedov, Y. Zeren, {\em On equivalent conditions for the general weighted Hardy type inequality in  space      $L^{p(\cdot)}$},  Z. Anal. Anwend. 31 (1) (2012), 55--74.

\bibitem{MaZe2}  F. I. Mamedov, Y. Zeren, {\em A necessary and sufficient condition for Hardy's operator in the variable Lebesgue   space}, Abstr. Appl. Anal. 2014, Art. ID 342910, 7 pp.

\bibitem{MCO} R. Mashiyev, B. \c{C}eki\c{c},  S. Ogras,
{\em On Hardy's inequality in $L^{p(x)}(0,\infty)$}
JIPAM. J. Inequal. Pure Appl. Math. 7 (3) (2006), 1--5.

\bibitem{MCMO} R. Mashiyev, B. \c{C}eki\c{c}, F. I. Mamedov, S. Ogras,
{\em Hardy's inequality in power--type weighted $L^{p(x)}(0,\infty)$},
 J. Math. Anal. Appl. 334 (1) (2007), 289--298.

\bibitem{pohmi_99} E. Mitidieri, S. Pohozaev,  {\em  Nonexistence of positive solutions to quasilinear elliptic problems in $\rn$,} Proc. Steklov. Inst. Math. { 227} (1999), 186--216, (translated from Tr. Mat. Inst.
Steklova { 227} (1999), 192--222).

\bibitem{miz1} Y. Mizuta, E. Nakai, T. Ohno, T. Shimomura, {\em Hardy's inequality in Orlicz--Sobolev spaces of variable exponent}, Hokkaido Math. J. 40 (2) (2011), 187--203.


\bibitem{muckenhoupt} B. Muckenhoupt, {\em Hardy's inequality with weights}, Studia Math. 44 (1972), 31--38.

\bibitem{nakano1} H. Nakano, {\em  Modulared Semi--ordered Linear Spaces}, Maruzen Co., Ltd., Tokyo, 1950.

\bibitem{nakano2} H. Nakano, {\em Topology and Linear Topological Spaces}, Maruzen Co., Ltd., Tokyo, 1951.

\bibitem{Orlicz} W. Orlicz, {\em \"{U}ber konjugierte Exponentenfolgen}, Studia Math. 3 (1931), 200--211.

\bibitem{RaSa} H. Rafeiro, S. Samko, {\em Hardy type inequality in variable Lebesgue spaces}, Ann. Acad. Sci. Fenn. Math. 34 (2009), 279--289.

\bibitem{raj-ru1} K. Rajagopal, M. R{u}\v{z}i\v{c}ka, {\em On the modeling of electrorheological materials}, Mech. Res. Commun. 23 (1996), 401--407.

\bibitem{raj-ru2} K. Rajagopal, M. R{u}\v{z}i\v{c}ka, {\em Mathematical modeling of electrorheological materials}, Contin. Mech.
Thermodyn. 13 (2001), 59--78.

\bibitem{el-rh2} M. R{u}\v{z}i\v{c}ka, {\em Electrorheological Fluids: Modeling and Mathematical Theory}, Lecture Notes in Math. 1748, Springer--Verlag, Berlin, 2000.

\bibitem{Samko1} S. Samko, {\em Hardy inequality in the generalized Lebesgue spaces}, Fract. Calc. Appl. Anal. 6 (4) (2003), 355--362.

\bibitem{plap} I. Skrzypczak, {\em Hardy--type inequalities derived from $p$--harmonic problems}, Nonlinear Anal. TMA 93 (2013), 30--50.

\bibitem{bcp-plap} I. Skrzypczak, {\em Hardy--Poincar\'{e} type inequalities derived from $p$--harmonic problems,} Banach Center Publ. 101 Calculus of Variations and PDEs (2014), 223--236.

\bibitem{orliczhardy} I. Skrzypczak, {\em Hardy inequalities resulted from nonlinear problems dealing with $A$--Laplacian,} NoDEA Nonlinear Differential Equations Appl.  21  (6) (2014), 841--868.


\end{thebibliography}
\end{document}